\def\E{{\mathcal E}}
\def\O{\Omega}
\def\pa{\partial}
\newcommand{\bq}{{\bf q}}
\newcommand{\bn}{{\bf n}}
\newcommand{\bx}{{\bf x}}
\def\T{{\mathcal T}}
\def\W{{\mathcal W}}
\def\l{{\langle}}
\def\r{{\rangle}}
\def\pa{\partial}
\def\he{{\widehat e}}
\def\hQ{{\widehat Q}}
\def\hw{{\widehat w}}
\def\O{\Omega}
\def\bbQ{\mathbb{Q}}
\newcommand{\pT}{{\partial T}}
\def\3bar{{|\hspace{-.02in}|\hspace{-.02in}|}}
\def\3bar{{|\hspace{-.02in}|\hspace{-.02in}|}}
\def\ad#1{\begin{aligned}#1\end{aligned}}  \def\b#1{\mathbf{#1}} \def\t#1{\hbox{#1}}
\def\a#1{\begin{align*}#1\end{align*}} \def\an#1{\begin{align}#1\end{align}}
\newtheorem{WG}{Weak Galerkin Algorithm}
\title{Weak Galerkin methods for elliptic interface problems on curved polygonal partitions}
\author{
Dan Li\thanks {Jiangsu Key Laboratory for NSLSCS, School of Mathematical Sciences,  Nanjing Normal University, Nanjing 210023, China (danlimath@163.com). The research of Dan Li was supported by   Jiangsu Funding Program for Excellent
Postdoctoral Talent (No. 2023ZB271) and National Natural Science Foundation of China (Grant No. 12071227 and No. 12371369).}
\and
Chunmei Wang \thanks{Department of Mathematics, University of Florida, Gainesville, FL 32611 (chunmei.wang@ufl.edu).  The research of Chunmei Wang was partially supported by National Science Foundation Grants DMS-2136380 and DMS-2206332.}
\and
Shangyou Zhang \thanks{Department of Mathematical Sciences, University of Delaware, Newark, DE 19716 (szhang@udel.edu).  }
}
\begin{document}

\maketitle

\begin{abstract}
This paper presents a new weak Galerkin (WG) method for elliptic interface problems on general curved polygonal partitions.  The method's key innovation lies in its ability to transform the complex interface jump condition into a more manageable Dirichlet boundary condition, simplifying the theoretical analysis significantly. The numerical scheme is designed by using locally constructed weak gradient on the curved polygonal partitions. We establish error estimates of optimal order for the numerical approximation in both discrete $H^1$ and $L^2$ norms. Additionally, we present various numerical results that serve to illustrate the robust numerical performance of the proposed WG interface method.

\end{abstract}

\begin{keywords}
weak Galerkin, finite element methods, elliptic interface problems, weak gradient, polygonal partitions, curved elements.
\end{keywords}


\section{Introduction}\label{Section:Introduction}
This paper focuses on the latest advancements in the Weak Galerkin finite element method for solving elliptic interface problems on curved polygonal partitions. To simplify our analysis, we concentrate on a model equation seeking an unknown function $u$ that satisfies:
\begin{eqnarray}
-\nabla\cdot({\color{black}{a}}\nabla u)&=&f,\quad~~ \mbox{in}\;\O,\label{model-1}\\
        u&=&g,\quad~~\mbox{on}\;\pa\O\setminus\Gamma,\label{model-2}\\
{[[u]]}_{\Gamma}=u|_{\O_1}-u|_{\O_2}&=&g_D,\quad \mbox{on}\;\Gamma,\label{model-3}\\
 {[[{\color{black}{a}}\nabla u\cdot \bn]]}_{\Gamma}
 ={\color{black}{a_1}}\nabla u|_{\O_1}\cdot \bn_1+{\color{black}{a_2}}\nabla u|_{\O_2}\cdot \bn_2&=&g_N,\quad \mbox{on}\;\Gamma,\label{model-4}
\end{eqnarray}
where $\O\subset\mathbb R^2$, $\O=\O_1\cup\O_2$, $\Gamma=\O_1\cap\O_2$, ${\color{black}{a_1}}={\color{black}{a}}|_{\O_1}$, ${\color{black}{a_2}}={\color{black}{a}}|_{\O_2}$, $\bn_1$ and $\bn_2$ represent the unit outward normal vectors to $\O_1\cap\Gamma$ and $\O_2\cap\Gamma$, respectively. Assume the diffusion tensor ${\color{black}{a}}$ is symmetric and uniformly positive definite matrix in $\O$.

A weak formulation of the model equation \eqref{model-1}-\eqref{model-4} is as follows: Find $u\in H^1(\O)$,  such that $u=g$ on
$\pa\O\setminus\Gamma$, ${[[u]]}_{\Gamma}=g_D$ on $\Gamma$, satisfies  
\begin{equation}\label{weak-formula}
({\color{black}{a}}\nabla u,\nabla v)=(f,v)+\langle g_N,v\rangle_\Gamma,\quad \forall v\in H_0^1(\O),
\end{equation}
where $H_0^1(\O)=\{v\in H^1(\Omega), v=0\ \text{on}\ \partial\Omega\}$.

Elliptic interface problems find applications in various fields of engineering and science, including biological systems \cite{KLL2009}, material science \cite{HLOZ1997}, fluid dynamics \cite{AL2009}, computational electromagnetic \cite{JH2003,CCCGW2011}. The presence of a discontinuous diffusion tensor in these problems results in solutions that exhibit discontinuities and/or lack smoothness across the interface. This low regularity of the solution presents a significant challenge in the development of high-order numerical methods. To address the mesh constraints associated with interface problems effectively, researchers have proposed several numerical techniques. These methods include interface-fitted mesh approaches, which involve modifying finite element meshes near the interface, and unfitted mesh methods, which alter the finite element discretization around the interface.

Unfitted mesh methods have garnered significant attention for their ability to utilize finite element meshes independently of the interface. They offer two primary strategies for handling interface elements. One approach involves adapting the finite element basis near the interface to construct a finite element space that satisfies the interface jump condition. This strategy encompasses methods like the immersed interface method \cite{ZL1998,JWCL2022,MZ2019,CCGL2022}, ghost fluid methods \cite{LS2003}, multiscale finite element methods \cite{CGH2010}, hybridizable discontinuous Galerkin methods \cite{DWX2017,HCWX2020}. 
Alternatively, another approach employs penalty terms across the interface to enforce the interface jump condition. This category includes methods like extended finite element methods  \cite{XXW2020,CCW2022}, unfitted finite element methods \cite{HH2002}, cut finite element methods \cite{BCHLM2015}, high-order hybridizable discontinuous Galerkin method \cite{HNPK2013}. Despite the successes achieved by unfitted mesh methods, several challenges remain. In particular, accurately capturing interface information for problems with highly complex interface geometries poses difficulties. Additionally, establishing rigorous convergence analyses for high-order numerical methods remains a challenging task.

As an alternative approach, several interface-fitted mesh methods have been developed to tackle elliptic interface problems. These methods aim to accommodate poorly generated meshes and situations with hanging nodes, particularly in the context of complex interfaces. Some notable methods include the discontinuous Galerkin method \cite{HNPK2013,LW2022,WGC2022}, the matched interface and boundary method method \cite{YW2007,ZZFW2006}, virtual element method \cite{CWW2017} and weak Galerkin methods \cite{MWWYZ2013,MWYZ2016, wz2023, CWW2022}. The WG methods, first introduced in  \cite{ellip_JCAM2013} and further developed in \cite{lww2023,lww20232,lw2023,CWW2023, CWW20222, cwyz,cw2020, cw2018} represent a novel class of numerical techniques for solving partial differential equations. Their primary innovation lies in the introduction of weak differential operators and weak functions, which grant WG methods several advantages. Notably, constructing high-order WG approximating functions becomes straightforward, as the continuity requirements for numerical approximations are relaxed. Furthermore, this relaxation of continuity requirements endows WG methods with high flexibility, particularly on general polygonal meshes with straight edges. However, when employing straight-edge elements to discretize curved regions, high-order numerical methods may suffer from reduced accuracy. To mitigate geometric errors arising from the transition between straight-edge and curved-edge regions, one approach is to directly utilize curved-edge elements for discretizing curved geometries \cite{HLY2020,VRC2019}.

The objective of this paper is to introduce a novel Weak Galerkin (WG) method designed for solving elliptic interface problems on general curved polygonal partitions. The new WG method is designed by using locally constructed weak gradient operator on the curved elements. Moreover, the error estimates of optimal order are established for the high order numerical approximation in discrete $H^1$ norm and usual $L^2$ norms.  What sets our approach apart from existing results on standard weak Galerkin methods is that it does not necessitate locally denser meshes near the interface. As a result, our proposed method not only significantly reduces the storage space and computational complexity but also offers greater flexibility in addressing complex interface geometries.

The remainder of the paper is structured as follows:
In Section \ref{Section:definition}, we provide a concise overview of the computation of the weak gradient operator and its discrete counterpart.
Section \ref{Section:WG-scheme} outlines the application of the Weak Galerkin method to solve the model problem described by equations \eqref{model-1} through \eqref{model-4}, based on the weak formulation presented in equation \eqref{weak-formula}.
Section \ref{Section:error-equation} derives an error equation relevant to the Weak Galerkin algorithm.
Section \ref{Section:ErrorEstimate} is focused on establishing error estimates of optimal order for the corresponding numerical approximations, considering both discrete $H^1$ and conventional $L^2$ norms.
Finally, in Section \ref{Section:EE}, we illustrate the practical application of the theoretical results through several numerical examples.

This paper will adhere to the standard notations for Sobolev spaces and norms, as detailed in \cite{GT1983}.
Let $D$ be an open, bounded domain with a Lipschitz continuous boundary denoted as $\partial D$ in $\mathbb{R}^2$. We employ the symbols $(\cdot, \cdot){s, D}$, $|\cdot|{s, D}$, and $|\cdot|{s, D}$ to represent the inner product, seminorm, and norm within the Sobolev space $H^s(D)$ where $s\geq 0$ is an integer.  In the case of $s=0$, we denote the inner product and norm as $(\cdot, \cdot){D}$ and $|\cdot|_{D}$, respectively.  When $D=\O$,  we omit the subscript $D$ in the corresponding inner product and norm notation. For the sake of simplicity, we use the notation "$A\lesssim B$" to express the inequality "$A\leq CB$," where $C$ represents an arbitrary positive constant that remains independent of mesh size or functions involved in the inequalities.

\section{Weak Gradient and Discrete Weak Gradient}\label{Section:definition}

The objective of this section is to provide a review of the definitions for the weak gradient operator and its discrete counterpart, as outlined in \cite{ellip_JCAM2013} and \cite{WY-ellip_MC2014}. To facilitate this review, consider a polygonal domain $T$ with a boundary $\partial T$ that is Lipschitz continuous. 

In this context, a weak function defined on $T$ is represented as $v={v_0,v_b}$, where $v_0\in L^2(T)$ and $v_b\in L^2(\partial T)$. The first component, $v_0$, and the second component, $v_b$, correspond to the values of $v$ within the interior of $T$ and on the boundary of $T$, respectively. It's worth noting that $v_b$ may not necessarily be the trace of $v_0$ on $\partial T$.

Let $\mathcal{W}(T)$ denote the space encompassing all such weak functions on $T$:
$$\W(T)=\{v=\{v_0,v_b\},~v_0\in L^2(T),~v_b\in L^2(\pa T)\}.$$

\begin{definition}(Weak gradient)
For any $v\in\W(T)$, the weak gradient of $v$, denoted as $\nabla_{w}v$, is defined as a linear functional in the dual space of $[H^1(T)]^2$ such that
\begin{eqnarray}\label{weak-grad}
(\nabla_{w}v,\pmb{\psi})_T=-(v_0,\nabla\cdot\pmb{\psi})_T+\langle v_b,\pmb{\psi}\cdot\bn\rangle_{\pa T},~~~\forall\pmb{\psi}\in[H^1(T)]^2,
\end{eqnarray}
where $\bn$ denotes the unit outward normal vector to $\pa T$.
\end{definition}

For any non-negative integer $r$, we denote by $P_r(T)$ the set of polynomials defined on the polygonal domain $T$ with a degree not exceeding $r$.

\begin{definition}(Discrete weak gradient)
A discrete form of $\nabla_{w}v$ for $v\in\W(T)$, denoted by $\nabla_{w,r,T}v$, is defined as a unique polynomial vector in $[P_r(T)]^2$ satisfying
\begin{eqnarray}\label{discre-weak-grad}
(\nabla_{w,r,T}v,\pmb{\psi})_T=-(v_0,\nabla\cdot\pmb{\psi})_T+\langle v_b,\pmb{\psi}\cdot\bn\rangle_{\pa T},~~~\forall\pmb{\psi}\in[P_r(T)]^2.
\end{eqnarray}
\end{definition}

\section{Weak Galerkin Scheme}\label{Section:WG-scheme}

In this section, we present the Weak Galerkin scheme for the model problems described by equations \eqref{model-1} through \eqref{model-4}. To facilitate this, consider $\mathcal{T}_h$, a curved polygonal partition of $\Omega$, which conforms to the shape regularity criteria outlined in \cite{WWL-2022-cur}. For the sake of simplicity, Figure \ref{Pictu-model} displays a curved triangular partition of a square domain, denoted as $\Omega$. It's worth noting that when the interface $\Gamma$ is curved, $\mathcal{T}_h$ fits seamlessly along the interface.

We denote $\mathcal{E}_h$ as the set encompassing all edges within $\mathcal{T}_h$, and $\mathcal{E}_h^0$ as the set of all interior edges, excluding those along $\partial\Omega$. Additionally, $\Gamma_h$ is defined as the set of interface edges within $\mathcal{E}_h$.   $h_T$ represents the diameter of an element $T\in\mathcal{T}_h$, and $h$ is the mesh size, defined as the maximum of $h_T$ over all $T\in\mathcal{T}_h$. Lastly, $|e|$ denotes the length of an edge $e\in\mathcal{E}_h$.

 \begin{figure}[!htbp]
\centering
\includegraphics[height=0.40\textwidth,width=0.40\textwidth]{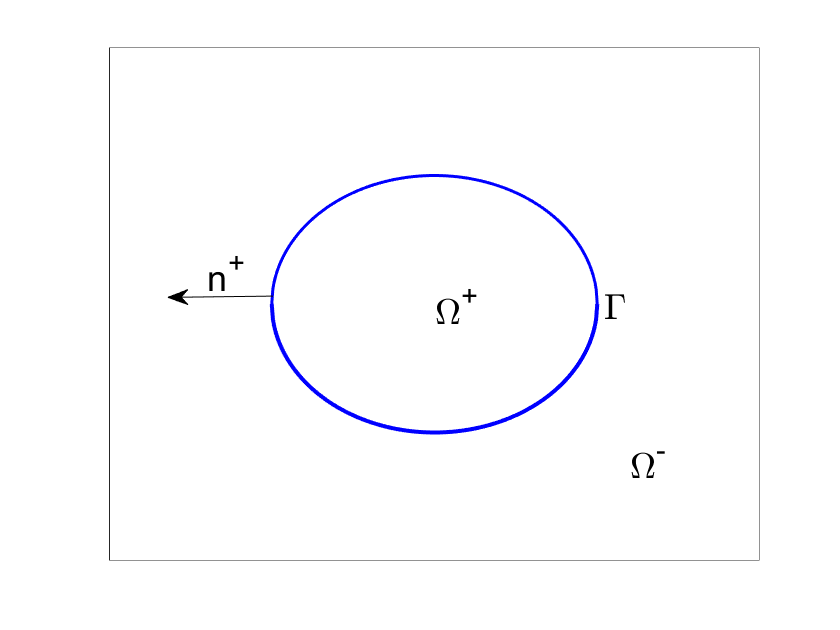}
\includegraphics[height=0.40\textwidth,width=0.40\textwidth]{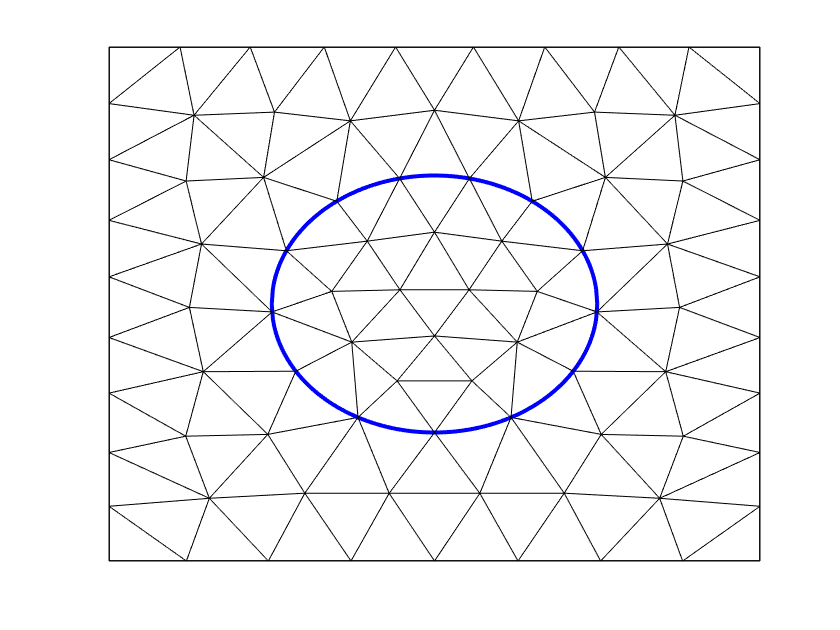}
\caption{The geometry of domain $\O=\O_1\cup\O_2\cup\Gamma$ with smooth interface $\Gamma$ (Left) and a fitted partition (Right).}
\label{Pictu-model}
\end{figure}

Let $e$ be the curved edge of the curved element $T$. Suppose that the parametric representation for edge $e$ is given by:
\[\bx={F}_e(\hat{t}),\qquad\hat{t}\in\hat{e}=[0,|e|],\]
where $\bx=(x,y)\in e$, ${F}_e(\hat{t})=(\phi(\hat{t}),\psi(\hat{t}))$, $\phi(\hat{t})\in C^n(\hat{e})$, $\psi(\hat{t})\in C^n(\hat{e})$ for some $n\geq1$.  In this context, ${F}_e:=(\phi,\psi)$ represents the mapping that transforms the curved edge $e$ to its corresponding straight edge $\hat{e}$, and we assume that this mapping ${F}_e$ is globally invertible on the reference edge $\hat{e}.$ Then, ${F}_e$ and its inverse mapping $\widehat{F}_e:={F}_e^{-1}$ can be extended to encompass the entire "pyramid" region, as discussed in  \cite{WWL-2022-cur}.

For any function $\widehat{w}\in L^2(\hat{e})$, we can use the mapping $\widehat{F}_e$ to obtain a function $w\in L^2(e)$ as follows:
\begin{equation}\label{EQ:March08.001}
w(\bx):=\hw(\widehat{F}_e(\bx)),\qquad\bx\in e.
\end{equation}
Similarly, any function $w\in L^2(e)$ can be transformed into a function $\hw\in L^2(\he)$ given by
\begin{equation}\label{EQ:March08.002}
\hw(\hat{t}):=w({F}_e(\hat{t})),\qquad\hat{t}\in\he.
\end{equation}
Consequently, we have the relationships:
$$
w=\hw\circ\widehat{F}_e,\quad\hw=w\circ{F}_e.
$$

Let $\ell \ge 0$ be any non-negative integer. We denote by ${P}_{\ell}(\he)$ the set of polynomials defined on the straight edge $\he$ with a degree no greater than $\ell$. By utilizing the mapping $\widehat{F}_e := F_e^{-1}$, we can transform the set of polynomials ${P}_{\ell}(\he)$ into a space of functions defined on the curved edge $e$. This transformed space is denoted as follows:
$$
V_b(e,\ell)=\{w=\widehat{w}\circ\widehat{F}_e:\ \ \widehat{w}\in{P}_{\ell}(\he)\}.
$$
Moreover, when the edge $e$ is a straight edge, we make the assumption that the mapping $F_e$ is an affine transformation. Consequently, the inverse mapping $\widehat{F}_e$ is also an affine transformation. In this special case, it follows that 
$$V_b(e,\ell)=P_{\ell}(e).$$

Let $k\geq1$ be any given integer.  When the  edge $e$ is on the interface $\Gamma_h$,  $v_b$ is differently valued as seen from the left side $e_L$ and from the right   side $e_R$; otherwise, $v_b$ is single  valued on the edge  $e\in\E_h^0\setminus\Gamma_h$. Denote by $V_h$ the finite element space associated with ${\cal T}_h$ as follows
 \an{\label{V-h}  \ad{
V_h= \{v=\{v_0,v_b\} : &  \ v_0\in P_k(T),~T\in{\cal T}_h, 
            \ v_b|_e\in V_b(e,k-1), e\in\E_h,  \\&
          v_b|_{e_L}\neq   v_b|_{e_R},  e\in\Gamma_h  
     \}. } }

Denote by $V_h^0$ a subspace of $V_h$ with homogeneous boundary value for $v_b$; i.e.,
$$V_h^0=\{v\in V_h,~v_b|_{e}=0,~~e\subset\pa\O\}.$$

For simplicity of notation, denote by $\nabla_wv$ the discrete weak gradient $\nabla_{w,r,T}v$ defined by \eqref{discre-weak-grad} on each element $T$ with $r=k-1$; i.e.,
\an{\label{w-g}
(\nabla_wv)|_T=\nabla_{w,r,T}(v|_T),~~~~~v\in V_h. }

For each edge $e\in\E_h$, denote by $Q_b$ the projection operator mapping from $L^2(e)$ to $V_b(e,k-1)$ given by
$$
Q_bw\circ F_e:=\hQ_b(w\circ F_e),\qquad w\in L^2(e),
$$
where $\hQ_b$ is the weighted $L^2$ projection operator onto ${P}_{k-1}(\he)$ with the corresponding Jacobian as the weight function. Note that when $e$ is a straight edge, the operator $Q_b$ represents the standard $L^2$ projection operator onto $P_{k-1}(e)$.

For any edge $e\in\Gamma_h$ shared by two adjacent elements $T_1\subset \O_1$ and $T_2\subset \O_2$, we denote by ${[[v_b]]}_{\Gamma_h}$ the jump of $v_b$ on $e\in\Gamma_h$; i.e.,
$${[[v_b]]}_{\Gamma_h}=v_b|_{\pa T_1\cap\Gamma_h}-v_b|_{\pa T_2\cap\Gamma_h}.$$

For any $v,w\in V_h$, let us introduce the following bilinear forms:
\begin{eqnarray*}
s(v,w)&=&\rho\sum_{T\in{\cal T}_h}h_T^{-1}\langle Q_bv_0-v_b,Q_bw_0-w_b\rangle_{\pa T},\\
a(v,w)&=&\sum_{T\in{\cal T}_h}({\color{black}{a}}\nabla_wv,\nabla_ww)_T+s(v,w),
\end{eqnarray*}
where $\rho>0$ is the stabilization parameter.

\begin{WG}
A weak Galerkin numerical scheme for the weak formulation \eqref{weak-formula} of  the model problem \eqref{model-1}-\eqref{model-4}  can be obtained by seeking $u_h=\{u_0,u_b\}\in V_h$ such that $u_b=Q_bg$ on $\pa\O$, ${[[u_b]]}_{\Gamma_h}=Q_bg_D$  satisfying
\begin{equation}\label{wg}
\begin{split}
a(u_h,v)=&(f,v_0)+\sum_{e\in\Gamma_h\cap\O_1}\l g_N,v_b\r_e+\sum_{e\in\Gamma_h\cap\O_2}\l g_N,v_b\r_e,\quad \forall v\in V_h^0.
\end{split}
\end{equation}
\end{WG}


\section{Error Equation}\label{Section:error-equation}

This section aims to derive an error equation for the weak Galerkin scheme \eqref{wg}. For simplicity of analysis, we assume that the coefficient tensor ${\color{black}{a}}$ in the model problem \eqref{model-1}-\eqref{model-4} is piecewise constant with respect to the finite element partition $\T_h$. The following analysis can be generalized to piecewise smooth   tensor ${\color{black}{a}}$ without technical difficulty.

Let $u$ and $u_h\in V_h$ be the exact solution of the model problem \eqref{model-1}-\eqref{model-4} and the numerical solution of the WG scheme \eqref{wg}, respectively. On each element $T\in\T_h$, denote by $Q_0$ the usual $L^2$ projection operator onto $P_k(T)$.  Recall that $Q_bu$ takes different values as seen from the left side and right side of the edge $e\subset\Gamma_h$ and takes a single value on the edge $e \subset \E_h^0\setminus\Gamma_h$. We further define a projection $Q_hu$ onto $V_h$ such that
$$Q_hu=\{Q_0u,Q_bu\}.$$
Denote by $\bbQ_h$ the $L^2$ projection operator onto $[P_{k-1}(T)]^2.$

Let the error function $e_h$  be defined by
$$e_h=Q_hu-u_h=\{e_0,e_b\}=\{Q_0u-u_0,Q_bu-u_b\}.$$

\begin{lemma}\cite{WWL-2022-cur}\label{commu-property}
For any $\pmb{\psi}\in[P_{k-1}(T)]^2$, there holds
\begin{equation*}
\begin{split}
(\nabla_wQ_hu,\pmb{\psi})_T&=(\nabla u,\pmb{\psi})_T+\l Q_bu-u,\pmb{\psi}\cdot\bn\r_\pT.
\end{split}
\end{equation*}
Note that $\l Q_bu-u,\pmb{\psi}\cdot\bn\r_\pT\neq0$ when the boundary $\pa T$   consists of at least one curved edge.
\end{lemma}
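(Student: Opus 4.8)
The plan is to start from the definition of the discrete weak gradient applied to the projected function $Q_hu=\{Q_0u,Q_bu\}$. By \eqref{discre-weak-grad} with $r=k-1$, for any $\pmb{\psi}\in[P_{k-1}(T)]^2$ we have
\begin{equation*}
(\nabla_wQ_hu,\pmb{\psi})_T=-(Q_0u,\nabla\cdot\pmb{\psi})_T+\l Q_bu,\pmb{\psi}\cdot\bn\r_{\pT}.
\end{equation*}
The idea is then to rewrite the two terms on the right so that the exact gradient $\nabla u$ appears. For the volume term, since $\nabla\cdot\pmb{\psi}\in[P_{k-1}(T)]$ (the divergence of a degree $k-1$ polynomial vector has degree at most $k-1$), the defining property of the $L^2$ projection $Q_0$ onto $P_k(T)$ gives $(Q_0u,\nabla\cdot\pmb{\psi})_T=(u,\nabla\cdot\pmb{\psi})_T$; this is the key step that removes $Q_0$ and lets us integrate by parts against the genuine $u$.

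First I would apply integration by parts to $-(u,\nabla\cdot\pmb{\psi})_T$ to obtain $(\nabla u,\pmb{\psi})_T-\l u,\pmb{\psi}\cdot\bn\r_{\pT}$. Substituting this back yields
\begin{equation*}
(\nabla_wQ_hu,\pmb{\psi})_T=(\nabla u,\pmb{\psi})_T-\l u,\pmb{\psi}\cdot\bn\r_{\pT}+\l Q_bu,\pmb{\psi}\cdot\bn\r_{\pT},
\end{equation*}
and combining the two boundary terms gives exactly $(\nabla u,\pmb{\psi})_T+\l Q_bu-u,\pmb{\psi}\cdot\bn\r_{\pT}$, which is the claimed identity.

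The main subtlety I anticipate is the handling of the boundary term $\l Q_bu-u,\pmb{\psi}\cdot\bn\r_{\pT}$ on curved edges. On a straight edge, $\pmb{\psi}\cdot\bn$ restricts to a polynomial of degree $k-1$ on $e$, so by the defining property of $Q_b$ as the $L^2$ projection onto $P_{k-1}(e)$ this term vanishes; this recovers the classical commutativity $(\nabla_wQ_hu,\pmb{\psi})_T=(\nabla u,\pmb{\psi})_T$. On a curved edge, however, $Q_b$ is defined through the weighted $L^2$ projection $\hQ_b$ on the reference edge $\he$ with the Jacobian weight, and $\pmb{\psi}\cdot\bn$ pulled back to $\he$ is generally \emph{not} a polynomial of degree $k-1$ (the normal $\bn$ and the arc-length element depend on the curved mapping $F_e$). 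Consequently the weighted-projection orthogonality does not annihilate $\l Q_bu-u,\pmb{\psi}\cdot\bn\r_{e}$, which is precisely why the residual boundary term survives and must be carried along in the identity — this is the content of the final remark in the lemma. Thus the proof itself is a short computation, and the genuine care lies in correctly invoking the two distinct projection properties ($Q_0$ against $\nabla\cdot\pmb{\psi}$ and $Q_b$ against $\pmb{\psi}\cdot\bn$) and in recognizing that the boundary orthogonality fails exactly on curved edges.
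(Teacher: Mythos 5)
Your proof is correct and takes essentially the expected approach: the paper itself gives no proof of this lemma (it is quoted from \cite{WWL-2022-cur}), and your computation — applying the definition \eqref{discre-weak-grad} to $Q_hu=\{Q_0u,Q_bu\}$, invoking the orthogonality $(Q_0u-u,\nabla\cdot\pmb{\psi})_T=0$ (valid since $\nabla\cdot\pmb{\psi}\in P_{k-2}(T)\subset P_k(T)$), and integrating by parts — is precisely the standard argument behind the cited result. Your explanation of the closing remark is also right: on a straight edge $\pmb{\psi}\cdot\bn\in P_{k-1}(e)$ so the $Q_b$-orthogonality kills the boundary term, whereas on a curved edge the pullback of $\pmb{\psi}\cdot\bn$ under $F_e$ is not a polynomial and the weighted projection $\hQ_b$ provides no such cancellation, so the term $\l Q_bu-u,\pmb{\psi}\cdot\bn\r_\pT$ genuinely survives.
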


\begin{lemma}\label{error-equ-0}
For any $v\in V_h^0$, the error function $e_h$ satisfies the following equation
\begin{equation*}
a(e_h,v)=s(Q_hu,v)+\ell_1(u,v)+\ell_2(u,v),
\end{equation*}
where $\ell_1(u,v)$ and $\ell_2(u,v)$ are given by
\begin{eqnarray*}
\ell_1(u,v)&=&\sum_{T\in\T_h}\langle({\color{black}{a}}\nabla u-{\color{black}{a}}\bbQ_h\nabla u)\cdot\bn,v_0-v_b\rangle_\pT,\\
\ell_2(u,v)&=&\sum_{T\in\T_h}\l Q_bu-u,{\color{black}{a}}\nabla_wv\cdot\bn\r_\pT.
\end{eqnarray*}
Note that the last term $\ell_2(u,v)=0$ when the boundary $\pa T$ are   straight edges.
\end{lemma}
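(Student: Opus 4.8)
The plan is to start from the definition $e_h=Q_hu-u_h$ and use the bilinearity of $a(\cdot,\cdot)$ to write $a(e_h,v)=a(Q_hu,v)-a(u_h,v)$, and then replace $a(u_h,v)$ by the right-hand side of the scheme \eqref{wg}. The whole task then reduces to rewriting $a(Q_hu,v)$ as the sum $(f,v_0)$ plus the interface data terms plus the stabilizer $s(Q_hu,v)$ and the two consistency terms $\ell_1,\ell_2$, so that subtracting $a(u_h,v)$ cancels $(f,v_0)$ together with the interface data and leaves precisely the claimed identity.

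First I would expand $a(Q_hu,v)=\sum_{T}(a\nabla_wQ_hu,\nabla_wv)_T+s(Q_hu,v)$, setting aside the stabilization term, which already appears verbatim in the statement. For the gradient term, the key observation is that since $a$ is piecewise constant and $\nabla_wv\in[P_{k-1}(T)]^2$, the vector $a\nabla_wv$ again lies in $[P_{k-1}(T)]^2$, so Lemma \ref{commu-property} may be applied with $\pmb{\psi}=a\nabla_wv$. Combined with the symmetry of $a$, this yields $\sum_T(a\nabla_wQ_hu,\nabla_wv)_T=\sum_T(a\nabla u,\nabla_wv)_T+\ell_2(u,v)$, producing exactly the second consistency term $\ell_2$; when $\pa T$ consists of straight edges, $Q_bu-u$ is $L^2$-orthogonal to the normal component of $\pmb\psi\in[P_{k-1}(T)]^2$, which explains the accompanying note that $\ell_2$ then vanishes.

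Next I would process $\sum_T(a\nabla u,\nabla_wv)_T$. Because $\bbQ_h$ is the $L^2$ projection onto $[P_{k-1}(T)]^2$ and $\nabla_wv$ is a polynomial, I may replace $a\nabla u$ by $a\bbQ_h\nabla u$ in this pairing, and then invoke the definition \eqref{discre-weak-grad} of the discrete weak gradient with $\pmb\psi=a\bbQ_h\nabla u$. Integrating by parts on the resulting interior term and reverting $a\bbQ_h\nabla u$ to $a\nabla u$ where it meets $\nabla v_0$ (again using the projection property), a second integration by parts together with $-\nabla\cdot(a\nabla u)=f$ on each element converts the volume term into $(f,v_0)$. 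Regrouping the boundary contributions against $v_0-v_b$ then isolates $\ell_1(u,v)=\sum_T\langle(a\nabla u-a\bbQ_h\nabla u)\cdot\bn,v_0-v_b\rangle_{\pT}$ plus a single residual edge-flux term $\sum_T\langle a\nabla u\cdot\bn,v_b\rangle_{\pT}$.

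The main obstacle is the treatment of this residual edge-flux term, where the interface structure enters. Regrouping edge by edge, on each interior non-interface edge the two adjacent contributions cancel, because there $a\nabla u\cdot\bn$ is continuous (the restrictions of $u$ to $\O_1$ and $\O_2$ are smooth and the normal flux matches), $v_b$ is single-valued, and the outward normals of the two elements are opposite; on each boundary edge the term vanishes since $v\in V_h^0$ forces $v_b=0$ on $\pa\O$. Only the interface edges survive, where the normal flux is genuinely two-valued, and invoking the flux jump condition \eqref{model-4} the one-sided contributions assemble into precisely the interface data terms carrying $g_N$ on the right-hand side of \eqref{wg}. Subtracting $a(u_h,v)$ then cancels both $(f,v_0)$ and these interface terms, leaving $a(e_h,v)=s(Q_hu,v)+\ell_1(u,v)+\ell_2(u,v)$. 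I expect the sign and outward-normal bookkeeping on the interface edges, and checking that the surviving one-sided fluxes match the scheme's $g_N$ terms side by side, to be the most delicate point of the argument.
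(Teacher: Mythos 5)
Your proposal is correct and is essentially the paper's own proof run in reverse: the paper starts by testing $-\nabla\cdot(a\nabla u)=f$ against $v_0$ and builds up to $\sum_T(\nabla_w Q_hu,a\nabla_w v)_T$, whereas you unwind $a(Q_hu,v)$ down to $(f,v_0)$, but both use exactly the same ingredients — Lemma \ref{commu-property} with $\pmb{\psi}=a\nabla_wv$ (giving $\ell_2$), the $\bbQ_h$ projection swap with the definition \eqref{discre-weak-grad} and integration by parts (giving $\ell_1$), and the edge-by-edge cancellation with the jump condition \eqref{model-4} producing the $g_N$ terms that cancel against the scheme. The interface-edge bookkeeping you flag as delicate is treated at the same level of detail in the paper itself, so no gap relative to the paper's argument.
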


\begin{proof}
By testing the model equation \eqref{model-1} against $v_0$ and then using the usual integration by parts, there holds
\begin{equation}\label{error-equ-1}
\begin{split}
&\sum_{T\in\T_h}(-\nabla\cdot({\color{black}{a}}\nabla u),v_0)_T\\
=&\sum_{T\in\T_h}({\color{black}{a}}\nabla u,\nabla v_0)_T-\l{\color{black}{a}}\nabla u\cdot\bn,v_0\r_\pT\\
=&\sum_{T\in\T_h}({\color{black}{a}}\nabla u,\nabla v_0)_T-\l{\color{black}{a}}\nabla u\cdot\bn,v_0-v_b\r_\pT-
  \sum_{e\in\Gamma_h}\l{\color{black}{a}}\nabla u\cdot\bn,v_b\r_e\\
=&\sum_{T\in\T_h}({\color{black}{a}}\nabla u,\nabla v_0)_T-\l{\color{black}{a}}\nabla u\cdot\bn,v_0-v_b\r_\pT-
\sum_{e\in\Gamma_h\cap\O_1}\l{[[{\color{black}{a}}\nabla u\cdot\bn]]},v_b\r_e\\
&-\sum_{e\in\Gamma_h\cap\O_2}\l{[[{\color{black}{a}}\nabla u\cdot\bn]]},v_b\r_e\\
=&\sum_{T\in\T_h}({\color{black}{a}}\nabla u,\nabla v_0)_T-\l{\color{black}{a}}\nabla u\cdot\bn,v_0-v_b\r_\pT-
  \sum_{e\in\Gamma_h\cap\O_1}\l g_N,v_b\r_e\\
&-\sum_{e\in\Gamma_h\cap\O_2}\l g_N,v_b\r_e,
\end{split}
\end{equation}
where we also used the boundary condition \eqref{model-4} and the fact that $v_b$ is single valued on   $e\in\E_h\setminus\Gamma_h$.

For the first term on the last line of \eqref{error-equ-1}, using the definition of $\bbQ_h$, the usual integration by parts and \eqref{discre-weak-grad}  yields
\begin{equation}\label{error-equ-2}
\begin{split}
({\color{black}{a}}\nabla u,\nabla v_0)_T
=&({\color{black}{a}}\bbQ_h\nabla u,\nabla v_0)_T\\
=&-(\nabla\cdot({\color{black}{a}}\bbQ_h\nabla u),v_0)_T+\langle{\color{black}{a}}\bbQ_h\nabla u\cdot\bn,v_0\rangle_{\pa T}\\
=&({\color{black}{a}}\bbQ_h\nabla u,\nabla_wv)_T-\langle v_b,{\color{black}{a}}\bbQ_h\nabla u\cdot\bn\rangle_{\pa T}+\langle{\color{black}{a}}\bbQ_h\nabla u\cdot\bn,v_0\rangle_{\pa T}\\
=&({\color{black}{a}}\bbQ_h\nabla u,\nabla_wv)_T+\langle{\color{black}{a}}\bbQ_h\nabla u\cdot\bn,v_0-v_b\rangle_{\pa T}.
\end{split}
\end{equation}
Substituting \eqref{error-equ-2} into \eqref{error-equ-1} and then using the definition of $\bbQ_h$, Lemma \ref{commu-property} with $\pmb{\psi}={\color{black}{a}}\nabla_wv$ give
\begin{equation}\label{error-equ-3}
\begin{split}
&\sum_{T\in\T_h}(-\nabla\cdot({\color{black}{a}}\nabla u),v_0)_T\\
=&\sum_{T\in\T_h}({\color{black}{a}}\bbQ_h\nabla u,\nabla_wv)_T
-\langle v_0-v_b,{\color{black}{a}}(\nabla u-\bbQ_h\nabla u)\cdot\bn\rangle_{\pa T}\\
&-\sum_{e\in\Gamma_h\cap\O_1}\l g_N,v_b\r_e-\sum_{e\in\Gamma_h\cap\O_2}\l g_N,v_b\r_e\\
=&\sum_{T\in\T_h}({\color{black}{a}}\nabla u,\nabla_wv)_T-\ell_1(u,v)-\sum_{e\in\Gamma_h\cap\O_1}\l g_N,v_b\r_e-\sum_{e\in\Gamma_h\cap\O_2}\l g_N,v_b\r_e\\
=&\sum_{T\in\T_h}(\nabla_wQ_hu,{\color{black}{a}}\nabla_wv)_T-\langle Q_bu-u,{\color{black}{a}}\nabla_wv\cdot\bn\rangle_{\pa T}-\ell_1(u,v)\\
 &-\sum_{e\in\Gamma_h\cap\O_1}\l g_N,v_b\r_e-\sum_{e\in\Gamma_h\cap\O_2}\l g_N,v_b\r_e\\
=&\sum_{T\in\T_h}(\nabla_wQ_hu,{\color{black}{a}}\nabla_wv)_T-\ell_2(u,v)-\ell_1(u,v)
   -\sum_{e\in\Gamma_h\cap\O_1}\l g_N,v_b\r_e\\
   &-\sum_{e\in\Gamma_h\cap\O_2}\l g_N,v_b\r_e.
\end{split}
\end{equation}
Using \eqref{model-1}, \eqref{wg} and $e_h=Q_hu-u_h$, one arrives at
$$a(e_h,v)=s(Q_hu,v)+\ell_1(u,v)+\ell_2(u,v),$$
which completes the proof of the lemma.
\end{proof}

\section{Technical Results}\label{Section:TechnicalResults}

This section is devoted to presenting some technical results. To this end, let $\T_h$ be a curved shape regular partition as described in \cite{WWL-2022-cur}. For any $T\in\T_h$ and $\phi\in H^1(T)$, the following trace inequality holds true \cite{WWL-2022-cur}:
\begin{equation}\label{Trace-inequality}
\|\phi\|_e^2\lesssim h_T^{-1}\|\phi\|_T^2+h_T\|\nabla\phi\|_{T}^2.
\end{equation}
If $\phi$ is a polynomial on any $T\in\T_h$, using the inverse inequality, there holds \cite{WWL-2022-cur}
\begin{equation}\label{Trace-inver-inequality}
\|\phi\|_e^2\lesssim h_T^{-1}\|\phi\|_T^2.
\end{equation}

\begin{lemma}\label{LEMMA:error-estimates}
Let $\T_h$ be a curved finite element partition of $\O$ that is shape regular as described in \cite{WWL-2022-cur}. For any $\phi\in H^{k+1}(\O)$, there holds \cite{WWL-2022-cur}
\begin{eqnarray}
\sum_{T\in\T_h}h_T^{2s}\|Q_0\phi-\phi\|_{s,T}^2&\lesssim&h^{2k+2}\|\phi\|_{k+1}^2,~~~~0\leq s\leq2,\label{error-estimates-1}\\
\sum_{T\in\T_h}h_T^{2s}\|\nabla \phi-\bbQ_h\nabla \phi\|_{s,T}^2&\lesssim&h^{2k}\|\phi\|_{k+1}^2,~~~~0\leq s\leq2,\label{error-estimates-2}\\
\sum_{T\in\T_h}\|Q_b\phi-\phi\|_\pT^2&\lesssim&h^{2k-1}\|\phi\|_{k}^2.\label{error-estimates-3}
\end{eqnarray}
\end{lemma}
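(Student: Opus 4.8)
The three bounds are all projection-error estimates, so in each case the engine is the Bramble--Hilbert lemma together with a scaling argument between a curved element $T$ (or its curved edge $e$) and a fixed reference configuration, realized through the extended mappings $F_e$, $\widehat F_e$ and the shape-regularity hypotheses of \cite{WWL-2022-cur}. The plan is to handle the two interior bounds \eqref{error-estimates-1} and \eqref{error-estimates-2} by one common argument, and then to reduce the boundary bound \eqref{error-estimates-3} to an interior estimate via the trace inequality \eqref{Trace-inequality}, paying special attention to an extra consistency term produced by the curvature of $e$.

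For \eqref{error-estimates-1}, since $Q_0$ reproduces $P_k(T)$, for any $p\in P_k(T)$ we have $Q_0\phi-p=Q_0(\phi-p)$, whence
\begin{equation*}
\|Q_0\phi-\phi\|_{s,T}\le \|Q_0(\phi-p)\|_{s,T}+\|\phi-p\|_{s,T}.
\end{equation*}
Taking $p$ to be the averaged Taylor polynomial of $\phi$, the Bramble--Hilbert estimate gives $\|\phi-p\|_{s,T}\lesssim h_T^{k+1-s}\|\phi\|_{k+1,T}$, while an inverse inequality applied to the polynomial $Q_0(\phi-p)$ yields $\|Q_0(\phi-p)\|_{s,T}\lesssim h_T^{-s}\|\phi-p\|_T\lesssim h_T^{k+1-s}\|\phi\|_{k+1,T}$. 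Multiplying by $h_T^{2s}$, squaring, and summing over $T\in\T_h$ gives \eqref{error-estimates-1}. Estimate \eqref{error-estimates-2} is structurally identical: $\bbQ_h$ reproduces $[P_{k-1}(T)]^2$ and $\nabla\phi\in[H^{k}(T)]^2$, so the same argument applied to the field $\nabla\phi$ (best-approximation order $k-1$) produces $\|\nabla\phi-\bbQ_h\nabla\phi\|_{s,T}\lesssim h_T^{k-s}\|\phi\|_{k+1,T}$, and the weighted summation gives the $h^{2k}$ bound.

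For \eqref{error-estimates-3}, let $\Pi\phi\in P_{k-1}(T)$ be an interior polynomial approximation of $\phi$. Using that $Q_b$ is a weighted-$L^2$ orthogonal projection onto $V_b(e,k-1)$, hence bounded on $L^2(e)$ and exact on $V_b(e,k-1)$, I would split
\begin{equation*}
\|Q_b\phi-\phi\|_e\le \|Q_b(\phi-\Pi\phi)\|_e+\|(I-Q_b)\Pi\phi\|_e+\|\Pi\phi-\phi\|_e\lesssim \|\phi-\Pi\phi\|_e+\|(I-Q_b)\Pi\phi\|_e.
\end{equation*}
The term $\|\phi-\Pi\phi\|_e$ is controlled by the trace inequality \eqref{Trace-inequality} together with the interior bounds $\|\phi-\Pi\phi\|_{j,T}\lesssim h_T^{k-j}\|\phi\|_{k,T}$ for $j=0,1$, giving $\|\phi-\Pi\phi\|_e^2\lesssim h_T^{2k-1}\|\phi\|_{k,T}^2$. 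The second term $\|(I-Q_b)\Pi\phi\|_e$ is the genuinely new curved-edge consistency error: pulling back to $\he$ and invoking Bramble--Hilbert for $P_{k-1}(\he)$, its size is governed by the $k$-th derivative of $\Pi\phi\circ F_e$, in which the top-order contribution vanishes because $\Pi\phi$ has degree $k-1$, leaving only lower-order derivatives of $\Pi\phi$ multiplied by derivatives of $F_e$; combined with the polynomial trace-inverse inequality \eqref{Trace-inver-inequality} this also yields $\|(I-Q_b)\Pi\phi\|_e^2\lesssim h_T^{2k-1}\|\phi\|_{k,T}^2$. Summing over all edges and elements then gives \eqref{error-estimates-3}.

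The main obstacle throughout is the non-affine nature of the curved maps. In contrast to the affine case, transforming an $H^s$ seminorm between $T$ and the reference element for $s\ge1$ generates terms carrying derivatives of $F_e$ and $\widehat F_e$ up to order $s$, so the shape-regularity hypotheses of \cite{WWL-2022-cur} are needed to guarantee uniform bounds on these derivatives and on the Jacobian (comparable to $h_T^2$), which is exactly what keeps the reference Bramble--Hilbert constants independent of $T$. The most delicate point is specific to \eqref{error-estimates-3}: the consistency term $\|(I-Q_b)\Pi\phi\|_e$ has no analogue in the straight-edge theory, where $\Pi\phi|_e\in V_b(e,k-1)$ and the term vanishes; verifying that it is of optimal order on curved edges hinges on the cancellation of the top-order derivative of $\Pi\phi\circ F_e$ and on the weight of $Q_b$ (the Jacobian of $F_e$) pulling back to the standard $L^2$ projection on $\he$ with constants controlled by the shape regularity.
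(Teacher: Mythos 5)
There is no internal proof to compare against: the paper does not prove this lemma at all, but imports all three estimates verbatim from \cite{WWL-2022-cur}, the citation being the entirety of its justification. Judged on its own merits, your reconstruction is essentially correct and is the argument one would expect that reference to contain. For \eqref{error-estimates-1} and \eqref{error-estimates-2}, writing $Q_0\phi-\phi=Q_0(\phi-p)-(\phi-p)$ with $p$ an averaged Taylor polynomial, and combining $L^2$-stability of the projection, an inverse inequality, and Bramble--Hilbert, is the standard route; it is also the right one for curved elements, since the Dupont--Scott form of Bramble--Hilbert applies directly on the physical element (star-shapedness being part of shape regularity), so no non-affine change of variables is ever needed for the interior bounds. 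For \eqref{error-estimates-3} you correctly isolate the genuinely curved phenomenon: the consistency term $\|(I-Q_b)\Pi\phi\|_e$, which vanishes identically for straight edges, and whose optimal order on a curved edge hinges on the cancellation of the top-order term of $d^k(\Pi\phi\circ F_e)/d\hat t^k$ because $\deg\Pi\phi=k-1$. That is precisely the mechanism that makes the lemma true on curved partitions.

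Two details should be made explicit to call the proof complete. First, the order of the consistency term requires uniform bounds on the parametrization derivatives $F_e^{(m)}$ for $2\le m\le k$ (curvature and its derivatives); the paper's Section 3 only assumes $F_e\in C^n(\hat e)$ for \emph{some} $n\ge 1$, which is insufficient for order $k$, so you must invoke $n\ge k$ as part of the shape-regularity package of \cite{WWL-2022-cur}, as you implicitly do. Second, after the Fa\`a di Bruno step your bound involves $\sum_{j\le k-1}\|D^j\Pi\phi\|_e$; to land on $\|\phi\|_{k,T}$ you need the chain $\|D^j\Pi\phi\|_e^2\lesssim h_T^{-1}\|D^j\Pi\phi\|_T^2$ by \eqref{Trace-inver-inequality}, followed by the stability estimate $|\Pi\phi|_{j,T}\le|\phi|_{j,T}+|\phi-\Pi\phi|_{j,T}\lesssim\|\phi\|_{k,T}$, which your sketch uses but does not state. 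Neither point is a gap in the approach; both are completable with the tools you already cite. (As a side remark, the case $s=2$ of \eqref{error-estimates-2} with $k=1$ requires more regularity than $\phi\in H^{k+1}$ for the left-hand side even to be finite; this defect is in the quoted statement itself, and is harmless since the paper only ever uses $s=0,1$.)
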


\begin{lemma}
For any $v\in V_h$, $\phi\in H^1(T)$ and $\bq\in[P_{k-1}(T)]^2$, there holds \cite{WWL-2022-cur}
\begin{eqnarray}
h_T^{-1}\|v_0-v_b\|_\pT^2&\lesssim&\|\nabla v_0\|_T^2+h_T^{-1}\|Q_bv_0-v_b\|_\pT^2,\label{happy.000}\\
\|\nabla v_0\|_T^2&\lesssim&\|\nabla_w v\|_T^2+h_T^{-1}\|Q_bv_0-v_b\|_\pT^2,\label{happy.001}
\end{eqnarray}
\begin{equation}\label{happy.002}
|\langle\phi-Q_b\phi,\bq\cdot\bn\rangle_e|\lesssim\left\{
\begin{array}{ll}
h_e^{1/2}\|\phi-Q_b\phi\|_\pT \,\|\bq\|_{T},\qquad\qquad\qquad \mbox{for $k\ge 1$,}\\
h_e^{3/2}\|\phi-Q_b\phi\|_\pT \,(\|\bq\|_{T}+\|\nabla \bq\|_T),\quad\mbox{for $k\ge 2$}.
\end{array}
\right.
\end{equation}
\end{lemma}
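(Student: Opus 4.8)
The plan is to treat the three estimates separately, building each from the definition \eqref{discre-weak-grad} of the discrete weak gradient together with the polynomial trace and inverse inequalities \eqref{Trace-inequality}--\eqref{Trace-inver-inequality} and the (weighted) $L^2$-orthogonality of $Q_b$. Throughout, the straight-edge versions are classical and the curved edges require the geometry of the edge map $F_e$.

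For \eqref{happy.001} I would start from \eqref{discre-weak-grad} and integrate by parts in the interior term, which gives the identity $(\nabla_w v,\pmb{\psi})_T=(\nabla v_0,\pmb{\psi})_T-\langle v_0-v_b,\pmb{\psi}\cdot\bn\rangle_{\pT}$ for all $\pmb{\psi}\in[P_{k-1}(T)]^2$. Since $v_0\in P_k(T)$ gives $\nabla v_0\in[P_{k-1}(T)]^2$, taking $\pmb{\psi}=\nabla v_0$ yields $\|\nabla v_0\|_T^2=(\nabla_w v,\nabla v_0)_T+\langle v_0-v_b,\nabla v_0\cdot\bn\rangle_{\pT}$. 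Writing $v_0-v_b=(v_0-Q_bv_0)+(Q_bv_0-v_b)$, the design of $Q_b$ makes the first contribution vanish on straight edges (there $\nabla v_0\cdot\bn\in P_{k-1}(e)$ is annihilated by $v_0-Q_bv_0$), while on curved edges it is controlled by \eqref{happy.002} with $\phi=v_0$ and $\bq=\nabla v_0$ and absorbed as a higher-order term. The remaining term $\langle Q_bv_0-v_b,\nabla v_0\cdot\bn\rangle_{\pT}$ is handled by Cauchy--Schwarz and the trace-inverse inequality \eqref{Trace-inver-inequality} for the polynomial $\nabla v_0$, which supplies the factor $h_T^{-1/2}$; dividing by $\|\nabla v_0\|_T$ and squaring gives \eqref{happy.001}.

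For \eqref{happy.000} I would use the same splitting together with the triangle inequality, reducing the claim to $h_T^{-1}\|v_0-Q_bv_0\|_{\pT}^2\lesssim\|\nabla v_0\|_T^2$. Because $Q_b$ reproduces constants on each edge, a best-approximation argument gives $\|v_0-Q_bv_0\|_e\lesssim\|v_0-\bar v_0\|_e$ for the mean $\bar v_0$ of $v_0$ over $T$; the trace inequality \eqref{Trace-inequality} followed by the Poincaré estimate $\|v_0-\bar v_0\|_T\lesssim h_T\|\nabla v_0\|_T$ then yields $\|v_0-Q_bv_0\|_{\pT}^2\lesssim h_T\|\nabla v_0\|_T^2$, which is exactly what is needed.

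The genuinely new and hardest part is \eqref{happy.002}, where the curved geometry enters. The starting observation is that, because $\hat Q_b$ carries the Jacobian as weight, $Q_b$ is orthogonal in the \emph{unweighted} $L^2(e)$ inner product: $\langle\phi-Q_b\phi,\chi\rangle_e=0$ for every $\chi\in V_b(e,k-1)$. Hence $\langle\phi-Q_b\phi,\bq\cdot\bn\rangle_e=\langle\phi-Q_b\phi,\bq\cdot\bn-\chi\rangle_e$ for any such $\chi$, and Cauchy--Schwarz with $\|\phi-Q_b\phi\|_e\le\|\phi-Q_b\phi\|_{\pT}$ reduces matters to bounding $\inf_{\chi\in V_b(e,k-1)}\|\bq\cdot\bn-\chi\|_e$. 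I would choose $\chi$ by freezing the normal at a reference point, $\bq\cdot\bn=\bq\cdot\bn_c+\bq\cdot(\bn-\bn_c)$: the leading part $\bq\cdot\bn_c$ is matched from $V_b(e,k-1)$ up to the higher-order non-affineness of $F_e$, and the curvature bound $\|\bn-\bn_c\|_{L^\infty(e)}\lesssim h_e$ controls the remainder by $h_e\|\bq\|_e$. Finishing with \eqref{Trace-inver-inequality}, $\|\bq\|_e\lesssim h_T^{-1/2}\|\bq\|_T$, turns $h_e\cdot h_T^{-1/2}$ into the advertised factor $h_e^{1/2}$ for $k\ge1$. For $k\ge2$ the linear polynomials available in $V_b(e,k-1)$ let me also match the first-order variation of $\bn$, so only the $O(h_e^2)$ curvature remainder survives; expanding $\bq$ to first order then brings in $\|\nabla\bq\|_T$ and produces the sharper factor $h_e^{3/2}$. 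The main obstacle throughout is making these edge-approximation estimates rigorous on curved edges—quantifying how well $\bq\cdot\bn$ is reproduced by $V_b(e,k-1)$ in terms of the regularity of $F_e$, its Jacobian, and the curvature bounds from the shape-regularity assumptions of \cite{WWL-2022-cur}.
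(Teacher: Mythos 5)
A preliminary remark on the comparison itself: the paper contains no proof of this lemma --- all three estimates are quoted verbatim from \cite{WWL-2022-cur} --- so your proposal can only be judged on its own merits. Most of it is sound. For \eqref{happy.000} and \eqref{happy.001} your reductions are correct: integration by parts in \eqref{discre-weak-grad} gives $(\nabla_w v,\bq)_T=(\nabla v_0,\bq)_T-\langle v_0-v_b,\bq\cdot\bn\rangle_\pT$, testing with $\bq=\nabla v_0\in[P_{k-1}(T)]^2$ is admissible, and the bound $\|v_0-Q_bv_0\|_\pT^2\lesssim h_T\|\nabla v_0\|_T^2$ (constancy reproduction and unweighted $L^2(e)$-stability of $Q_b$, trace inequality \eqref{Trace-inequality}, Poincar\'e) is exactly the missing ingredient both estimates need. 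Two caveats: in \eqref{happy.001} the curved-edge contribution you absorb is of size $Ch_T\|\nabla v_0\|_T^2$, so the absorption works only for $h_T$ below a fixed threshold, which should be stated; and invoking \eqref{happy.002} there is not circular, since its proof is independent of \eqref{happy.001}. Your central structural observation for \eqref{happy.002} --- that the Jacobian weight built into $\hQ_b$ makes $Q_b$ the orthogonal projection onto $V_b(e,k-1)$ in the \emph{unweighted} $L^2(e)$ inner product --- is correct and is precisely the design rationale for that weight; with it, the $k\ge1$ case goes through as you sketch, since $h_e h_T^{-1/2}\le h_e^{1/2}$.

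The genuine gap is in the $k\ge 2$ case. ``Matching the first-order variation of $\bn$'' cannot be done inside $V_b(e,k-1)$ in the way you describe: the natural candidate is $(\bq\circ A_e)\cdot\bn_\ell$, where $A_e$ is the affine approximation of $F_e$ and $\bn_\ell$ the linear Taylor expansion of $\bn\circ F_e$, but $\bq\circ A_e$ has degree $k-1$ and $\bn_\ell$ has degree $1$, so their product has degree $k$ and is \emph{not} an admissible $\chi$. Retreating to a generic Bramble--Hilbert bound $\inf_{\chi\in P_1}\|g-\chi\|\lesssim h_e^2\|g''\|$ also fails: the term $\|\nabla^2\bq\|_T$ in $g''$ must be removed by the inverse inequality $\|\nabla^2\bq\|_T\lesssim h_T^{-1}\|\nabla\bq\|_T$, and the resulting bound $h_e^2h_T^{-3/2}\|\nabla\bq\|_T\sim h^{1/2}\|\nabla\bq\|_T$ misses the claimed rate by a full factor of $h$. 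The repair is to pair the two expansions crosswise, i.e.\ a product-of-differences decomposition: with $\bq_c$ a fixed value of $\bq$ on $e$ and $\bn_c$ a fixed normal,
\begin{equation*}
\bq\cdot\bn=\bq_c\cdot\bn+(\bq-\bq_c)\cdot\bn_c+(\bq-\bq_c)\cdot(\bn-\bn_c).
\end{equation*}
Approximate the first term by $\bq_c\cdot\bn_\ell$ (degree $1\le k-1$, admissible; pointwise error $\lesssim h_e^2|\bq_c|\lesssim h_e^2h_T^{-1}\|\bq\|_T$), the second by $(\bq\circ A_e-\bq_c)\cdot\bn_c$ (degree $k-1$, admissible; pointwise error $\lesssim h_e^2\|\nabla\bq\|_{L^\infty(T)}\lesssim h_e^2h_T^{-1}\|\nabla\bq\|_T$), and bound the cross term directly, since both factors are small: $\lesssim h_e\|\nabla\bq\|_{L^\infty(T)}\cdot h_e$. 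All three remainders are $O(h_e^2h_T^{-1})(\|\bq\|_T+\|\nabla\bq\|_T)$ pointwise, hence $O(h_e^{5/2}h_T^{-1})\le h_e^{3/2}$ in $L^2(e)$ after multiplying by $|e|^{1/2}$, which is the claimed bound. Note finally that this argument uses two derivatives of $\bn\circ F_e$ (a bounded derivative of curvature), i.e.\ more smoothness of the parametrization than the $C^1$ stated in this paper; that regularity must be imported from the shape-regularity assumptions of \cite{WWL-2022-cur}.
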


For any $v\in V_h$, the weak Galerkin scheme \eqref{wg} induces a semi norm given by
\begin{equation}\label{semi-norm}
\3barv\3bar^2=a(v,v).
\end{equation}

\begin{lemma}\label{LEMMA:triplebarnorm}
For any $v\in V_h^0$, the semi norm defined in \eqref{semi-norm} is a norm.
\end{lemma}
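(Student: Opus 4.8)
The plan is to recognize that $\3barv\3bar^2=a(v,v)$ is already manifestly a seminorm, so the whole content of the lemma is positive definiteness: one must show that $\3barv\3bar=0$ forces $v=\{v_0,v_b\}=0$. Writing out the form, $a(v,v)=\sum_{T\in\T_h}(a\nabla_wv,\nabla_wv)_T+s(v,v)$, where $s(v,v)=\rho\sum_{T\in\T_h}h_T^{-1}\|Q_bv_0-v_b\|_\pT^2\ge0$ and, by the uniform positive definiteness of the coefficient tensor $a$, the first sum is bounded below by $\alpha\sum_{T\in\T_h}\|\nabla_wv\|_T^2\ge0$ for some $\alpha>0$. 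Since $\3barv\3bar=0$ makes a sum of two nonnegative quantities vanish, each must vanish separately. I would conclude immediately that $\nabla_wv=0$ on every $T\in\T_h$ and that $Q_bv_0=v_b$ on every $\pa T$.

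Next I would feed these two facts into the stability inequalities already available in the excerpt. From \eqref{happy.001}, $\|\nabla v_0\|_T^2\lesssim\|\nabla_wv\|_T^2+h_T^{-1}\|Q_bv_0-v_b\|_\pT^2=0$, so $\nabla v_0=0$ on each element, i.e. $v_0$ is an elementwise constant. Feeding this back into \eqref{happy.000} gives $h_T^{-1}\|v_0-v_b\|_\pT^2\lesssim\|\nabla v_0\|_T^2+h_T^{-1}\|Q_bv_0-v_b\|_\pT^2=0$, so that $v_b$ agrees with the constant value of $v_0$ on $\pa T$ for every $T$. At this stage $v$ is reduced to a piecewise-constant interior component together with a boundary component equal to the interior trace.

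Finally I would run a connectivity argument to pin these constants to zero. On every interior edge $e\in\E_h^0\setminus\Gamma_h$ the component $v_b$ is single valued, and since $v_b$ equals the elementwise constant of $v_0$ from both sides, the constants on the two elements sharing $e$ must coincide. Propagating this across all such edges shows $v_0$ is a single constant on each connected subdomain $\O_1$ and $\O_2$. Because $v\in V_h^0$ enforces $v_b=0$ on $\pa\O$, any subdomain that meets $\pa\O\setminus\Gamma$ has its constant forced to $0$; with $v_0\equiv0$ we then get $v_b\equiv0$, hence $v=0$.

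The main obstacle, and the step I would handle most carefully, is precisely this last propagation across the interface. Because $V_h$ permits $v_b|_{e_L}\neq v_b|_{e_R}$ on $e\in\Gamma_h$, the two subdomains are decoupled by the interface: neither the weak gradient nor the stabilization links the constant on $\O_1$ to the constant on $\O_2$. Consequently the argument pins each subdomain's constant to zero \emph{independently}, and this works only under the standing geometric assumption (as in Figure \ref{Pictu-model}) that $\O_1$ and $\O_2$ are each connected and each has boundary on $\pa\O\setminus\Gamma$ where the homogeneous Dirichlet condition applies. I would state this assumption explicitly, since it is exactly what guarantees that the interface jump degree of freedom cannot support a nonzero kernel element of $\3bar\cdot\3bar$ on $V_h^0$.
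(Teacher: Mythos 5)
Your argument follows the standard kernel-chasing route, and its element-level half is exactly right and matches the proof the paper points to (Lemma 5.1 of the cited curved-element WG paper): $\3bar v\3bar=0$ forces $\nabla_w v=0$ on every $T$ and $Q_bv_0=v_b$ on every $\pa T$; then \eqref{happy.001} gives $\nabla v_0=0$, so $v_0$ is constant on each element, \eqref{happy.000} gives $v_b=v_0$ on $\pa T$, and single-valuedness of $v_b$ on edges in $\E_h^0\setminus\Gamma_h$ propagates the constant within each subdomain, with the condition $v_b=0$ on $\pa\O$ pinning to zero the constants of any component that reaches the outer boundary.

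The genuine gap is in your final step, and the assumption you add to close it is the opposite of the paper's actual geometry. You require that \emph{each} of $\O_1$, $\O_2$ meets $\pa\O\setminus\Gamma$, citing Figure~\ref{Pictu-model}; but that figure, and both numerical experiments (the circle $x^2+y^2=1$ inside $(-2,2)^2$ and the flower-shaped curve inside $(-4,4)^2$), show $\Gamma$ as a closed curve with $\O_1$ compactly contained in $\O$, so $\pa\O_1\cap\pa\O=\emptyset$ and your propagation never reaches $\O_1$. Your worry about the decoupled interface is real: with $V_h^0$ read literally (only $v_b|_e=0$ for $e\subset\pa\O$), the lemma is in fact false in that geometry --- take $v_0\equiv1$ on all elements in $\O_1$, $v_b\equiv1$ on all their edges including the $\O_1$-side values on $\Gamma_h$, and $v\equiv0$ elsewhere; then $\nabla_w v=0$ on every element (divergence theorem applied to constants) and $Q_bv_0-v_b=0$ on every $\pa T$, so $a(v,v)=0$ with $v\neq0$. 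The missing ingredient is not a geometric hypothesis but the homogeneous jump constraint on the test space: $V_h^0$ must be understood to also satisfy ${[[v_b]]}_{\Gamma_h}=0$. This is what the error function $e_h=Q_hu-u_h$ satisfies (both $Q_bu$ and $u_b$ have interface jump $Q_bg_D$), it is what makes the discrete system square, and it is implicitly used in the derivation of the error equation. With that constraint, $v_b$ is single-valued across interface edges as well, your connectivity argument crosses $\Gamma$, and only connectedness of $\O$ is needed to conclude $v\equiv0$.
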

\begin{proof}
The proof is similar to the proof of Lemma 5.1 in \cite{WWL-2022-cur}.
\end{proof}

%
%
%
%
%


\begin{lemma}\label{error-equation-estimate}
For any $u\in H^{k+1}(\O_i)$ for $i=1,2$ and $v\in V_h$, there holds
\begin{eqnarray}
|s(Q_hu,v)|&\lesssim&h^k(\|u\|_{k+1,\O_1}+\|u\|_{k+1,\O_2})\3bar v\3bar,\label{mmm1}\\
\left|\ell_1(u,v)\right|&\lesssim&h^k(\|u\|_{k+1,\O_1}+\|u\|_{k+1,\O_2})\3barv\3bar,\label{mmm2}\\
\left|\ell_2(u,v)\right|&\lesssim&h^k(\|u\|_{k+1,\O_1}+\|u\|_{k+1,\O_2})\3barv\3bar,\label{mmm3}
\end{eqnarray}
where $\ell_1(u,v)$ and $\ell_2(u,v)$ are given by Lemma \ref{error-equ-0}.
\end{lemma}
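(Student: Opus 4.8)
The plan is to handle the three quantities by the same two-step device: split each off-diagonal boundary pairing with the Cauchy--Schwarz inequality (first on each $\pT$, then over $T\in\T_h$) into a factor that depends only on the approximation error of $u$ and a factor that is controlled by $\3barv\3bar$. The second factor will be absorbed into the triple-bar norm using \eqref{semi-norm} and the uniform positive-definiteness of $a$; the first factor will be estimated by combining the trace inequality \eqref{Trace-inequality} with the approximation results of Lemma \ref{LEMMA:error-estimates}. Throughout, since $\T_h$ is fitted to $\Gamma$, every element $T$ lies entirely in $\O_1$ or in $\O_2$, so all element-wise sums split across the two subdomains and produce the piecewise norm $\|u\|_{k+1,\O_1}+\|u\|_{k+1,\O_2}$.

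I would first dispose of the $v$-dependent factors, which are common to all three bounds. From $\3barv\3bar^2=a(v,v)=\sum_{T}(a\nabla_wv,\nabla_wv)_T+s(v,v)$ and the uniform positive-definiteness of $a$, both $\sum_{T}\|\nabla_wv\|_T^2\lesssim\3barv\3bar^2$ and $\sum_{T}h_T^{-1}\|Q_bv_0-v_b\|_\pT^2\lesssim\3barv\3bar^2$ hold directly. For $\ell_1$, where the factor $v_0-v_b$ appears rather than $Q_bv_0-v_b$, I would apply \eqref{happy.000} and then \eqref{happy.001} to get $\sum_{T}h_T^{-1}\|v_0-v_b\|_\pT^2\lesssim\sum_{T}\big(\|\nabla_wv\|_T^2+h_T^{-1}\|Q_bv_0-v_b\|_\pT^2\big)\lesssim\3barv\3bar^2$. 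This shows that in each case the $v$-factor is $\lesssim\3barv\3bar$.

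Next I would treat the three approximation factors. For $s(Q_hu,v)$, note that $Q_hu=\{Q_0u,Q_bu\}$ gives $Q_bv_0-v_b=Q_b(Q_0u-u)$; using the $L^2$-stability of the (weighted) projection $Q_b$ on each edge, the trace inequality \eqref{Trace-inequality}, and the element-wise form of \eqref{error-estimates-1} with $s=0,1$, one obtains $\sum_{T}h_T^{-1}\|Q_b(Q_0u-u)\|_\pT^2\lesssim h^{2k}(\|u\|_{k+1,\O_1}^2+\|u\|_{k+1,\O_2}^2)$, which yields \eqref{mmm1} after Cauchy--Schwarz. For $\ell_1(u,v)$, boundedness of $a$ and the trace inequality \eqref{Trace-inequality} applied to $\nabla u-\bbQ_h\nabla u$ give $\sum_{T}h_T\|a(\nabla u-\bbQ_h\nabla u)\cdot\bn\|_\pT^2\lesssim\sum_{T}\big(\|\nabla u-\bbQ_h\nabla u\|_T^2+h_T^2|\nabla u-\bbQ_h\nabla u|_{1,T}^2\big)\lesssim h^{2k}(\|u\|_{k+1,\O_1}^2+\|u\|_{k+1,\O_2}^2)$ by \eqref{error-estimates-2} with $s=0,1$, establishing \eqref{mmm2}. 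For $\ell_2(u,v)$, the integrand pairs $Q_bu-u$ against the polynomial vector $a\nabla_wv\in[P_{k-1}(T)]^2$ (here $a$ piecewise constant is used), so I would apply the curved-edge estimate \eqref{happy.002} (the case $k\ge1$) with $\bq=a\nabla_wv$ edge by edge, then \eqref{error-estimates-3}, to get $|\ell_2(u,v)|\lesssim\big(\sum_{T}h_T\|u-Q_bu\|_\pT^2\big)^{1/2}\big(\sum_{T}\|a\nabla_wv\|_T^2\big)^{1/2}\lesssim h^{k}(\|u\|_{k,\O_1}+\|u\|_{k,\O_2})\3barv\3bar$, and finally bound $\|u\|_{k,\O_i}\le\|u\|_{k+1,\O_i}$ to reach \eqref{mmm3}.

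I expect the main obstacle to be twofold and essentially bookkeeping in nature: first, correctly matching the powers of $h_T$ when combining the trace inequality \eqref{Trace-inequality} with the element-wise approximation estimates of Lemma \ref{LEMMA:error-estimates}, so that each factor carries exactly $h^{k}$; and second, the interface handling, namely applying all estimates separately on the elements of $\O_1$ and $\O_2$ (where $u$ is only piecewise $H^{k+1}$) and reassembling them into the split norm, together with using the curved-edge pairing \eqref{happy.002} rather than a plain trace estimate for $\ell_2$, which is what makes the curved-boundary term tractable.
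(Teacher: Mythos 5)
Your proposal is correct and follows essentially the same route as the paper: Cauchy--Schwarz on each $\pT$, the trace inequality \eqref{Trace-inequality} combined with Lemma \ref{LEMMA:error-estimates} for the $u$-dependent factors, \eqref{happy.000}--\eqref{happy.001} to absorb the $v_0-v_b$ term into $\3bar v\3bar$, and \eqref{happy.002} with $\bq=a\nabla_wv$ plus \eqref{error-estimates-3} for $\ell_2$. Your explicit remark that \eqref{error-estimates-3} yields $\|u\|_{k,\O_i}$, which must then be bounded by $\|u\|_{k+1,\O_i}$, is a detail the paper glosses over, but otherwise the two arguments coincide.
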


\begin{proof} To derive the first inequality (\ref{mmm1}), it follows from the Cauchy-Schwarz inequality, the property of $Q_b$, \eqref{Trace-inequality} and \eqref{error-estimates-1} that
\begin{eqnarray*}
|s(Q_hu, v)|
&=&|\rho\sum_{T\in\T_h}h_T^{-1}\langle Q_b(Q_0u)-Q_bu,Q_bv_0-v_b\rangle_\pT|\\
&\lesssim&\Big(\rho\sum_{T\in\T_h}h_T^{-1}\|Q_b(Q_0u-u)\|^2_{\pT}\Big)^{\frac12}
      \Big(\rho\sum_{T\in\T_h}h_T^{-1}\|Q_bv_0-v_b\|^2_{\pT}\Big)^{\frac12}\\
&\lesssim&\Big(\sum_{T\in\T_h}h_T^{-1}\|Q_0u-u\|^2_{\pT}\Big)^{\frac12}\3barv\3bar\\
&\lesssim&\Big(\sum_{T\in\T_h}h_T^{-1}h_T^{-1}\|Q_0u-u\|^2_{T}+h_T^{-1}h_T|Q_0u-u|^2_{1,T}\Big)^{\frac12}\3barv\3bar\\
&\lesssim&h^k(\|u\|_{k+1,\O_1}+\|u\|_{k+1,\O_2})\3bar v\3bar.
\end{eqnarray*}

To analyze the second inequality (\ref{mmm2}), using the Cauchy-Schwarz inequality \eqref{Trace-inequality}, \eqref{error-estimates-2}, \eqref{error-estimates-3}, \eqref{happy.001} and \eqref{semi-norm}, there holds
\begin{equation}\label{EQ:July7-2016:001}
\begin{split}
&|\ell_1(u,v)|\\
=&|\sum_{T\in\T_h}\langle{\color{black}{a}}\nabla u\cdot\bn-{\color{black}{a}}\bbQ_h\nabla u\cdot\bn,v_0-v_b\rangle_\pT|\\
\lesssim&\Big(\sum_{T\in\T_h} h_T\|{\color{black}{a}}\nabla u-{\color{black}{a}}\bbQ_h\nabla u\|_{\pT}^2\Big)^{\frac12}
    \Big(\sum_{T\in\T_h} h_T^{-1}\|v_0-v_b\|_\pT^2\Big)^{\frac12}\\
\lesssim&\Big(\sum_{T\in\T_h} h_Th_T^{-1}\|{\color{black}{a}}\nabla u-{\color{black}{a}}\bbQ_h\nabla u\|_{T}^2
+h_Th_T|{\color{black}{a}}\nabla u-{\color{black}{a}}\bbQ_h\nabla u|_{1,T}^2\Big)^{\frac12}\\
&\cdot\Big(\sum_{T\in\T_h} h_T^{-1}\|Q_bv_0-v_b\|_\pT^2+h_T^{-1}\|v_0-Q_bv_0\|_\pT^2\Big)^{\frac12}\\
\lesssim&h^k(\|u\|_{k+1,\O_1}+\|u\|_{k+1,\O_2})\Big(\3bar v\3bar^2+\sum_{T\in\T_h}h_T^{-1}h_T\|\nabla v_0\|_T^2\Big)^{\frac12}\\
\lesssim&h^k(\|u\|_{k+1,\O_1}+\|u\|_{k+1,\O_2})
\Big(\3bar v\3bar^2+\sum_{T\in\T_h}\|\nabla_w v\|_T^2+h_T^{-1}\|Q_bv_0-v_b\|_{\pT}^2\Big)^{\frac12}\\
\lesssim&h^k(\|u\|_{k+1,\O_1}+\|u\|_{k+1,\O_2})\Big(\3bar v\3bar^2+\3bar v\3bar^2+\3bar v\3bar^2\Big)^{\frac12}\\
\lesssim&h^k(\|u\|_{k+1,\O_1}+\|u\|_{k+1,\O_2})\3bar v\3bar.
\end{split}
\end{equation}

To estimate the last estimate (\ref{mmm3}), from \eqref{happy.002}, the Cauchy-Schwarz inequality, \eqref{error-estimates-3} and \eqref{semi-norm}, there yields
\begin{eqnarray*}
|\ell_2(u,v)|
&=&|\sum_{T\in\T_h}\l Q_bu-u,\color{black}{a}\nabla_wv\cdot\bn\r_\pT|\\
&\lesssim&\sum_{T\in\T_h}h_T^{\frac12}\|Q_bu-u\|_\pT\|\color{black}{a}\nabla_wv\|_T \\
&\lesssim&h^{\frac12}\Big(\sum_{T\in\T_h}\|Q_bu-u\|_\pT^2\Big)^{\frac12}\Big(\sum_{T\in\T_h}\|\color{black}{a}\nabla_wv\|_T^2\Big)^{\frac12}\\
&\lesssim&h^kh^{\frac{2k-1}{2}}(\|u\|_{k+1,\O_1}+\|u\|_{k+1,\O_2})\3barv\3bar\\
&\lesssim&h^k(\|u\|_{k+1,\O_1}+\|u\|_{k+1,\O_2})\3barv\3bar.
\end{eqnarray*}
This completes the proof of the lemma.
\end{proof}

\section{Error Estimates}\label{Section:ErrorEstimate}

The objective of this section is to establish some optimal order error estimates for the numerical approximation.

\begin{theorem}\label{H1-errorestimate}
Let $u$ and $u_h\in V_h$ be the exact solution of the model problem \eqref{model-1}-\eqref{model-4} and the numerical solutions of the WG scheme \eqref{wg}, respectively. Assume that the exact solution $u$ satisfies $u\in H^{k+1}(\O_i)$ for $i=1,2$. Then, the following error estimate holds true
\begin{equation}\label{err1}
\3bare_h\3bar\lesssim h^k(\|u\|_{k+1,\O_1}+\|u\|_{k+1,\O_2}).
\end{equation}
\end{theorem}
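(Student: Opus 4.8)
The plan is to close the argument by testing the error equation against the error itself, so that the left-hand side becomes the squared triple-bar norm while the right-hand side is controlled by the three estimates already assembled. First I would verify that $e_h$ is an admissible test function, i.e.\ that $e_h\in V_h^0$. Its boundary component is $e_b=Q_bu-u_b$. On $\pa\O\setminus\Gamma$ the exact solution satisfies $u=g$ by \eqref{model-2}, whence $Q_bu=Q_bg$, while the WG solution is constructed so that $u_b=Q_bg$ there; hence $e_b=0$ on $\pa\O$, which is precisely the defining condition of $V_h^0$. Using the linearity of the edge projection $Q_b$ together with \eqref{model-3} and the interface constraint ${[[u_b]]}_{\Gamma_h}=Q_bg_D$ in \eqref{wg}, one likewise checks that ${[[e_b]]}_{\Gamma_h}=0$, although the latter is not needed for membership in $V_h^0$.

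With admissibility in place, I would set $v=e_h$ in Lemma~\ref{error-equ-0} to obtain
\[
a(e_h,e_h)=s(Q_hu,e_h)+\ell_1(u,e_h)+\ell_2(u,e_h).
\]
By the definition \eqref{semi-norm} the left-hand side equals $\3bar e_h\3bar^2$. Each of the three terms on the right is bounded by $h^k(\|u\|_{k+1,\O_1}+\|u\|_{k+1,\O_2})\3bar e_h\3bar$ according to \eqref{mmm1}, \eqref{mmm2}, and \eqref{mmm3} of Lemma~\ref{error-equation-estimate} (applied with $v=e_h$). Summing the three contributions gives
\[
\3bar e_h\3bar^2\lesssim h^k(\|u\|_{k+1,\O_1}+\|u\|_{k+1,\O_2})\,\3bar e_h\3bar.
\]
If $\3bar e_h\3bar=0$ the estimate \eqref{err1} is immediate; otherwise I would divide through by $\3bar e_h\3bar$, which is legitimate because $\3bar\cdot\3bar$ is genuinely a norm on $V_h^0$ by Lemma~\ref{LEMMA:triplebarnorm}. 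Either way \eqref{err1} follows.

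In this final step there is essentially no analytic obstacle left: the consistency identity of Lemma~\ref{commu-property}, the treatment of the curved-edge boundary term $\ell_2$, and the trace and approximation estimates have all been absorbed into Lemmas~\ref{error-equ-0} and \ref{error-equation-estimate}. The only points requiring care are bookkeeping ones, namely confirming that $e_h$ carries homogeneous boundary data so that it may legitimately serve as a test function, and confirming that cancelling one power of $\3bar e_h\3bar$ is justified by the norm property. I therefore expect the proof to be short, amounting to the substitution $v=e_h$ followed by a direct application of the three bounds of Lemma~\ref{error-equation-estimate}.
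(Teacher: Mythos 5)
Your proposal is correct and follows essentially the same route as the paper: substitute $v=e_h$ into the error equation of Lemma~\ref{error-equ-0}, identify $a(e_h,e_h)=\3bar e_h\3bar^2$ via \eqref{semi-norm}, apply the three bounds of Lemma~\ref{error-equation-estimate}, and cancel one factor of $\3bar e_h\3bar$. The extra bookkeeping you include (checking $e_h\in V_h^0$ and justifying the cancellation via Lemma~\ref{LEMMA:triplebarnorm}) is left implicit in the paper but is a sound addition.
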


\begin{proof}
By taking $v=e_h$ in Lemma \ref{error-equ-0}, one arrives at
\begin{equation*}
a(e_h,e_h)=s(Q_hu,e_h)+\ell_1(u,e_h)+\ell_2(u,e_h).
\end{equation*}
It follows from \eqref{semi-norm} and Lemma \ref{error-equation-estimate} with $v=e_h$ that
$$\3bare_h\3bar^2\lesssim h^k(\|u\|_{k+1,\O_1}+\|u\|_{k+1,\O_2})\3bare_h\3bar.$$

This completes the proof.
\end{proof}

\begin{corollary}\label{Corollary:error-estimate}
Under the assumptions of Theorem \ref{H1-errorestimate}, the following error estimate holds true
\begin{equation}\label{Corollary:error-estimate-1}
\begin{split}
\|\nabla e_0\|\lesssim h^k(\|u\|_{k+1,\O_1}+\|u\|_{k+1,\O_2}).
\end{split}
\end{equation}
\end{corollary}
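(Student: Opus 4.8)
The plan is to obtain this corollary as a direct consequence of the energy-norm estimate already established in Theorem \ref{H1-errorestimate}. The only gap to bridge is that the semi norm $\3bar\cdot\3bar$ controls the \emph{weak} gradient $\nabla_w e_h$ together with the boundary stabilization, whereas the quantity of interest is the \emph{classical} gradient $\nabla e_0$ of the interior component $e_0=Q_0u-u_0$. The bridge is precisely the inequality \eqref{happy.001}, applied elementwise to $v=e_h$.

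First I would unpack the definition \eqref{semi-norm} of the semi norm via the bilinear form $a(\cdot,\cdot)$. Since the tensor $a$ is symmetric and uniformly positive definite and the stabilization parameter $\rho>0$ is a fixed constant independent of $h$, the two constituents of $\3bar e_h\3bar^2$ are each individually bounded by it:
$$
\sum_{T\in\T_h}\|\nabla_w e_h\|_T^2 \lesssim \3bar e_h\3bar^2,
\qquad
\sum_{T\in\T_h}h_T^{-1}\|Q_b e_0 - e_b\|_\pT^2 \lesssim \3bar e_h\3bar^2,
$$
the first coming from the volume contributions $(a\nabla_w e_h,\nabla_w e_h)_T$ and the second from $s(e_h,e_h)=\rho\sum_{T}h_T^{-1}\|Q_b e_0-e_b\|_\pT^2$.

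Next I would apply \eqref{happy.001} with $v=e_h$ on each element $T\in\T_h$, giving
$$
\|\nabla e_0\|_T^2 \lesssim \|\nabla_w e_h\|_T^2 + h_T^{-1}\|Q_b e_0 - e_b\|_\pT^2 .
$$
Summing over all $T$ and substituting the two lower bounds above yields $\|\nabla e_0\|^2 \lesssim \3bar e_h\3bar^2$, where $\|\nabla e_0\|$ is understood as the global $L^2(\O)$ norm. It then suffices to invoke Theorem \ref{H1-errorestimate} to replace $\3bar e_h\3bar$ by $h^k(\|u\|_{k+1,\O_1}+\|u\|_{k+1,\O_2})$ and take square roots, producing \eqref{Corollary:error-estimate-1}.

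As for the main obstacle: there is no serious analytical difficulty remaining, since all the substantial work was carried out in establishing \eqref{happy.001} and Theorem \ref{H1-errorestimate}. The only point requiring care is to ensure that both pieces of $\3bar e_h\3bar^2$ — the weak-gradient energy and the boundary stabilization — are extracted separately and matched to the two terms generated by \eqref{happy.001}. This step depends on the uniform positive definiteness of $a$ to lower-bound the volume part and on $\rho$ being a fixed positive constant, so that the implied constants stay uniform in the mesh size $h$.
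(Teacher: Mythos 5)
Your proposal is correct and follows essentially the same route as the paper's own proof: apply \eqref{happy.001} with $v=e_h$, sum over elements, bound the result by $\3bar e_h\3bar$ via \eqref{semi-norm}, and conclude with Theorem \ref{H1-errorestimate}. Your additional remarks on the uniform positive definiteness of $a$ and the fixed stabilization parameter $\rho$ merely make explicit what the paper leaves implicit in the step $\bigl(\sum_{T\in\T_h}\|\nabla_w e_h\|_T^2+h_T^{-1}\|Q_be_0-e_b\|_\pT^2\bigr)^{1/2}\lesssim\3bar e_h\3bar$.
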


\begin{proof}
It follows from \eqref{happy.001}, \eqref{semi-norm} and Theorem \ref{H1-errorestimate} that
\begin{eqnarray*}
\|\nabla e_0\|
&\lesssim&\Big(\sum_{T\in\T_h}\|\nabla_w e_h\|_T^2+h_T^{-1}\|Q_be_0-e_b\|_\pT^2\Big)^{\frac12}\\
&\lesssim&\3bare_h\3bar\\
&\lesssim&h^k(\|u\|_{k+1,\O_1}+\|u\|_{k+1,\O_2}).
\end{eqnarray*}
This completes the proof of the corollary.
\end{proof}

\begin{theorem}\label{L2-errorestimate}
Let $u\in H^{k+1}(\O_i)$ for $i=1,2$  be the exact solution of \eqref{model-1}-\eqref{model-4} and $u_h\in V_h$ be the numerical solution of WG scheme \eqref{wg}, respectively. Assume that the dual problem of \eqref{model-1}-\eqref{model-4} satisfies the $H^2$ regular property as described in \cite{ellip_JCAM2013}. Then, the following error estimate holds true
\begin{equation}\label{error-estimate-2-eo}
\begin{split}
\|e_0\|\lesssim h^{k+1}(\|u\|_{k+1,\O_1}+\|u\|_{k+1,\O_2}).
\end{split}
\end{equation}
\end{theorem}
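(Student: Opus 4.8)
The plan is to establish the $L^2$ estimate \eqref{error-estimate-2-eo} by the standard Aubin--Nitsche duality argument adapted to the weak Galerkin setting. First I would introduce the dual (auxiliary) problem: find $\Phi$ satisfying $-\nabla\cdot(a\nabla\Phi)=e_0$ in $\O$, with homogeneous Dirichlet data on $\pa\O\setminus\Gamma$ and homogeneous interface jump conditions $\ljump\Phi\rjump_\Gamma=0$ and $\ljump a\nabla\Phi\cdot\bn\rjump_\Gamma=0$ on $\Gamma$. The $H^2$-regularity assumption gives the a priori bound $\|\Phi\|_{2,\O_1}+\|\Phi\|_{2,\O_2}\lesssim\|e_0\|$, which will absorb one of the factors of $\|e_0\|$ at the end. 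Since $e_0$ is only piecewise smooth across $\Gamma$, the regularity must be measured separately on $\O_1$ and $\O_2$, exactly as in the primal estimate.

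Next I would test the dual problem against $e_0$ and run the same integration-by-parts and weak-gradient manipulations that produced Lemma~\ref{error-equ-0}, but now with $\Phi$ playing the role of the exact solution and $e_h$ (or $Q_h\Phi$) as the test object. Concretely, I expect to obtain an identity of the schematic form
\begin{equation*}
\|e_0\|^2=(e_0,-\nabla\cdot(a\nabla\Phi))=a(e_h,Q_h\Phi)-\Big(s(Q_h\Phi,e_h)+\ell_1(\Phi,e_h)+\ell_2(\Phi,e_h)\Big)+\text{(projection remainders)},
\end{equation*}
after which I invoke the error equation of Lemma~\ref{error-equ-0} with $v=Q_h\Phi$ (appropriately modified so it lies in $V_h^0$) to rewrite $a(e_h,Q_h\Phi)$ in terms of $s(Q_hu,Q_h\Phi)$, $\ell_1(u,Q_h\Phi)$ and $\ell_2(u,Q_h\Phi)$. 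The right-hand side then decomposes into a collection of bilinear-form terms, each coupling a projection error of $u$ (order $h^k$) with a projection error of the dual solution $\Phi$ (order $h$, since $\Phi$ only has $H^2$ regularity, i.e. $k=1$ in the dual estimates of Lemma~\ref{LEMMA:error-estimates} and Lemma~\ref{error-equation-estimate}).

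Each of these terms I would bound using the now-familiar toolkit: the Cauchy--Schwarz inequality, the trace inequalities \eqref{Trace-inequality}--\eqref{Trace-inver-inequality}, the approximation estimates \eqref{error-estimates-1}--\eqref{error-estimates-3}, and the stability bounds \eqref{happy.000}--\eqref{happy.002}. The mechanism is that $s(\cdot,\cdot)$, $\ell_1$ and $\ell_2$ are estimated just as in Lemma~\ref{error-equation-estimate}, but asymmetrically: one argument carries the $u$-projection error contributing $h^k(\|u\|_{k+1,\O_1}+\|u\|_{k+1,\O_2})$ and the other carries the $\Phi$-projection error contributing $h\,(\|\Phi\|_{2,\O_1}+\|\Phi\|_{2,\O_2})\lesssim h\|e_0\|$. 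Multiplying gives the $h^{k+1}$ rate and one factor of $\|e_0\|$, which cancels against the $\|e_0\|^2$ on the left.

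The main obstacle I anticipate is the careful treatment of the curved-edge remainder terms, specifically the $\ell_2$-type contributions of the form $\l Q_b\Phi-\Phi,\,a\nabla_w e_h\cdot\bn\r_{\pT}$ together with the genuinely consistency-error term $\l Q_bu-u,\,a\nabla_w(Q_h\Phi)\cdot\bn\r_{\pT}$, which are nonzero precisely because the partition contains curved edges (cf.\ the remark in Lemma~\ref{commu-property} and Lemma~\ref{error-equ-0}). These terms do not appear in the classical straight-edge $L^2$ analysis, so I must verify that the sharper edge estimate \eqref{happy.002} (the $k\ge 2$ branch, giving an extra power $h_e^{3/2}$) is available and correctly matched against $\|\nabla_w(Q_h\Phi)\|_T+\|\nabla(\nabla_w(Q_h\Phi))\|_T$ so that no order is lost. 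A secondary technical point is confirming that $Q_h\Phi$ has the correct boundary and interface behaviour to serve as an admissible test function in the error equation; since the dual solution has homogeneous jump data, $Q_b\Phi$ is single-valued across $\Gamma$ and $Q_h\Phi\in V_h^0$, so the argument goes through, but this must be checked rather than assumed.
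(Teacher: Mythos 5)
Your proposal is correct and takes essentially the same route as the paper: the paper's entire proof of this theorem is a one-line citation to Theorem 6.4 of \cite{ellippolyreduc2015}, which is exactly the Aubin--Nitsche duality argument you outline (dual problem with data $e_0$ and $H^2$ regularity, the error equation of Lemma~\ref{error-equ-0} tested with $Q_h\Phi\in V_h^0$, and asymmetric bounds pairing the $h^k$ projection error of $u$ against the $h\,\|e_0\|$ projection error of $\Phi$). If anything, your sketch is more complete than the paper's, since the cited proof treats straight-edge polytopal partitions where $\ell_2\equiv 0$, and you correctly single out the curved-edge terms --- and the need for the $h_e^{3/2}$ branch of \eqref{happy.002} matched against $\|\nabla_w(Q_h\Phi)\|_T$ and its gradient --- as the genuinely new point that must be verified in this setting.
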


\begin{proof}
The proof is similar to the proof of Theorem 6.4 in \cite{ellippolyreduc2015}.
\end{proof}

To establish the error estimate for $e_b$, we define the following semi-norm
\begin{eqnarray}\label{normed}
\|e_b\|_{\E_h}=\Big(\sum_{T\in\T_h}h_T\|e_b\|_{\pa T}^2\Big)^{1/2}.
\end{eqnarray}

\begin{theorem}\label{L2-errorestimate-eb}
In the assumptions of Theorem \ref{L2-errorestimate}, we have the following error estimate
\begin{equation}
\|e_b\|_{\E_h}\lesssim h^{k+1}(\|u\|_{k+1,\O_1}+\|u\|_{k+1,\O_2}).
\end{equation}
\end{theorem}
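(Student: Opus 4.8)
The plan is to control $\|e_b\|_{\E_h}$ by comparing $e_b$ with the more tractable quantity $Q_be_0$, thereby splitting the estimate into a part governed by the stabilization term (already controlled by $\3bar e_h\3bar$ via Theorem \ref{H1-errorestimate}) and a part governed by the interior error $e_0$ (controlled in $L^2$ by Theorem \ref{L2-errorestimate}). Concretely, I would insert $Q_be_0$ and use the triangle inequality edge by edge,
$$\|e_b\|_{\pa T}\le\|e_b-Q_be_0\|_{\pa T}+\|Q_be_0\|_{\pa T},$$
then square, multiply by $h_T$, sum over $T\in\T_h$, and apply the discrete triangle inequality in $\ell^2$ to obtain $\|e_b\|_{\E_h}\le\|Q_be_0-e_b\|_{\E_h}+\|Q_be_0\|_{\E_h}$. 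It then suffices to bound each piece by $h^{k+1}(\|u\|_{k+1,\O_1}+\|u\|_{k+1,\O_2})$.

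For the first piece I would rewrite the weight as $h_T=h_T^2\cdot h_T^{-1}$ and use $h_T\le h$, so that
$$\sum_{T\in\T_h}h_T\|Q_be_0-e_b\|_{\pa T}^2\le h^2\sum_{T\in\T_h}h_T^{-1}\|Q_be_0-e_b\|_{\pa T}^2=\frac{h^2}{\rho}\,s(e_h,e_h).$$
Since $\color{black}{a}$ is uniformly positive definite, $s(e_h,e_h)\le a(e_h,e_h)=\3bar e_h\3bar^2$, and Theorem \ref{H1-errorestimate} gives $\3bar e_h\3bar\lesssim h^k(\|u\|_{k+1,\O_1}+\|u\|_{k+1,\O_2})$; hence this piece is $\lesssim h^{k+1}(\|u\|_{k+1,\O_1}+\|u\|_{k+1,\O_2})$.

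For the second piece I would exploit that $e_0=Q_0u-u_0\in P_k(T)$ is a polynomial, together with the $L^2$-stability of $Q_b$, which is in fact an $L^2$ contraction on each edge because the pulled-back weighted projection preserves the edge $L^2$ norm under the change of variables with the Jacobian weight; this gives $\|Q_be_0\|_{\pa T}\le\|e_0\|_{\pa T}$. The inverse trace inequality \eqref{Trace-inver-inequality} then yields $\|e_0\|_{\pa T}^2\lesssim h_T^{-1}\|e_0\|_T^2$, whence
$$\sum_{T\in\T_h}h_T\|Q_be_0\|_{\pa T}^2\lesssim\sum_{T\in\T_h}\|e_0\|_T^2=\|e_0\|^2\lesssim h^{2k+2}(\|u\|_{k+1,\O_1}+\|u\|_{k+1,\O_2})^2,$$
the last step being exactly the $L^2$ estimate of Theorem \ref{L2-errorestimate}. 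Combining the two bounds finishes the proof.

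I do not anticipate a genuine obstacle here; once the split is made, the argument is essentially bookkeeping. The only two points requiring care are (i) checking that the stabilization term carries precisely the factor $h_T^{-1}$ which, after reweighting by $h_T$, upgrades the $O(h^k)$ energy bound into an $O(h^{k+1})$ edge bound, and (ii) justifying the $L^2$-stability of the weighted boundary projection $Q_b$ on curved edges so that $\|Q_be_0\|_{\pa T}\le\|e_0\|_{\pa T}$; both are immediate from the definitions and the shape-regularity of $\T_h$. A minor alternative for the second piece, should one prefer to avoid the inverse trace inequality, is to apply the trace inequality \eqref{Trace-inequality} to $e_0$ and then invoke Corollary \ref{Corollary:error-estimate} for the gradient contribution $h^2\|\nabla e_0\|^2$.
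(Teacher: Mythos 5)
Your proof is correct and follows essentially the same route as the paper: the same insertion of $Q_be_0$ with the triangle inequality, the same reweighting $h_T=h^2\cdot h_T^{-1}$ to absorb the stabilization piece via $s(e_h,e_h)\le\3bar e_h\3bar^2$ and Theorem \ref{H1-errorestimate}, and the same use of the $L^2$-boundedness of $Q_b$ together with the inverse trace inequality \eqref{Trace-inver-inequality} and Theorem \ref{L2-errorestimate} for the remaining piece. Your explicit justification of the edge-wise contraction property of the weighted projection $Q_b$ on curved edges is a detail the paper leaves implicit, but it is the same argument.
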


\begin{proof}
Using the triangle inequality, the trace inequality \eqref{Trace-inver-inequality}, Theorem  \ref{H1-errorestimate} and  Theorem  \ref{L2-errorestimate}, there holds
\begin{equation*}\label{L2-estimate-edge-2}
\begin{split}
 \|e_b\|_{\E_h}
=&\Big(\sum_{T\in{\cal T}_h}h_T\|e_b\|_{\pa T}^2\Big)^{\frac{1}{2}}\\
\lesssim&\Big(\sum_{T\in{\cal T}_h}h_T\|Q_be_0\|_{\pa T}^2+h_T\|e_b-Q_be_0\|_{\pa T}^2\Big)^{\frac{1}{2}}\\
\lesssim&\Big(\sum_{T\in{\cal T}_h}h_Th_T^{-1}\|e_0\|_{T}^2\Big)^{\frac{1}{2}}
    +\Big(\sum_{T\in{\cal T}_h}h_{T}^{2}\rho_1h_{T}^{-1}\|e_b-Q_be_0\|_{\pa T}^2\Big)^{\frac{1}{2}}\\
\lesssim&\|e_0\|+h\3bare_h\3bar\\
\lesssim&h^{k+1}(\|u\|_{k+1,\O_1}+\|u\|_{k+1,\O_2}).
\end{split}
\end{equation*}
This completes the proof of the theorem.
\end{proof}

\section{Numerical Experiments}\label{Section:EE}

This section presents some numerical experiments to validate the accuracy of the developed convergence theory.

In the first numerical test,  we solve the elliptic interface problem \eqref{weak-formula}:
  Find $u\in H^1(\Omega)$ such that $u = 2-(x^2+y^2)^3$ on $\partial \Omega$ and satisfying 
\an{ (a \nabla u, \nabla v) = (36 (x^2+y^2)^2, v), \quad\forall v\in H^1_0(\Omega),
   \label{e-1} }
where \an{ \label{mu} \ad{
  a(x,y)&=\begin{cases} \mu, & \t{if }\ x^2+y^2 \le 1, \\
                         1,  & \t{if }\ x^2+y^2 > 1, \end{cases} \\
  \Omega&= (-2,2)\times(-2,2).  } }

\begin{figure}[ht]\begin{center}\setlength\unitlength{1.2in} 
    \begin{picture}(4.03,1.1)
\put(0,0.98){$\Gamma$ and $\partial \Omega$:}
\put(1.02,0.98){$G_1$:}
\put(2.03,0.98){$G_2$:}
\put(3.04,0.98){$G_3$:}
 \put(0,-0.35){\includegraphics[width=1.4in]{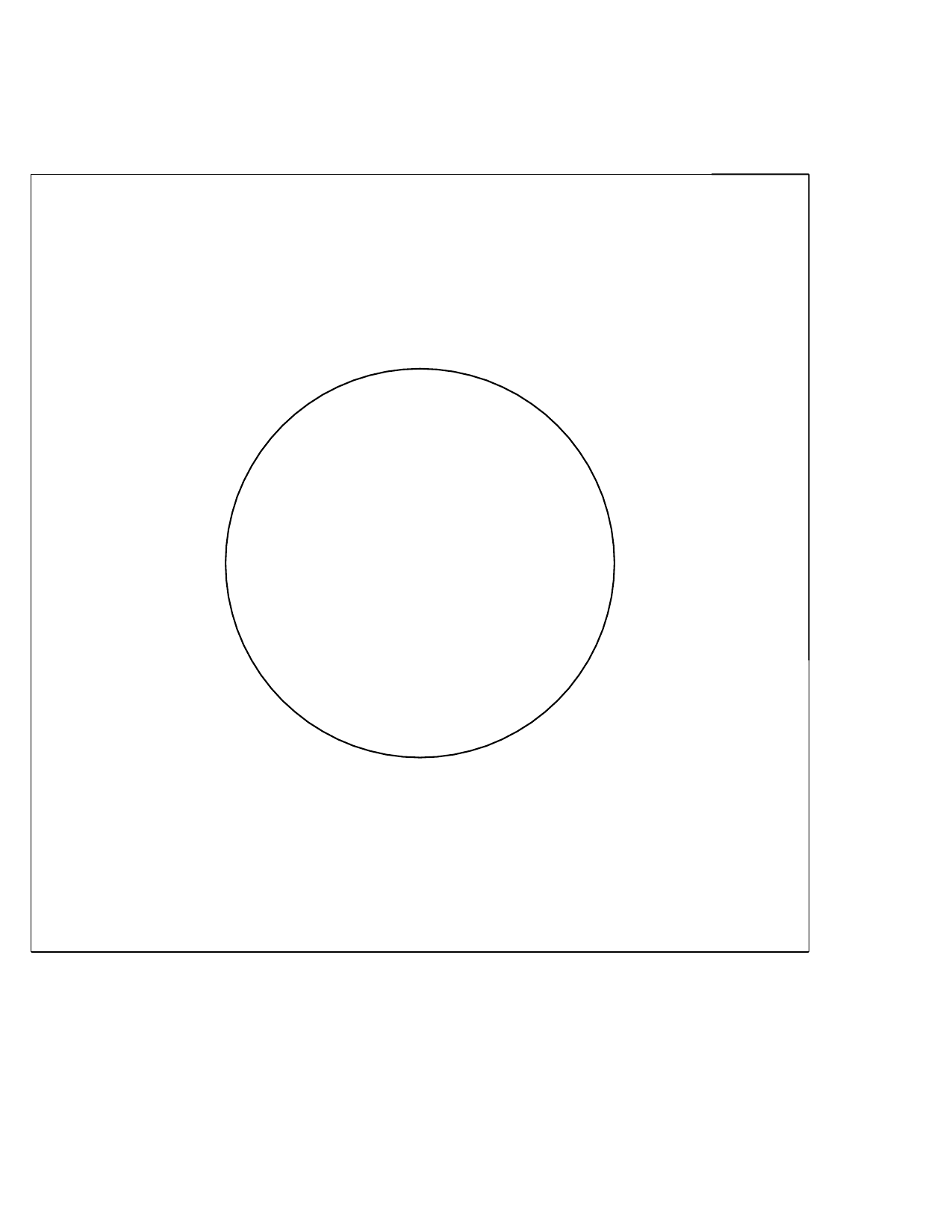}}  
 \put(1.01,-0.35){\includegraphics[width=1.4in]{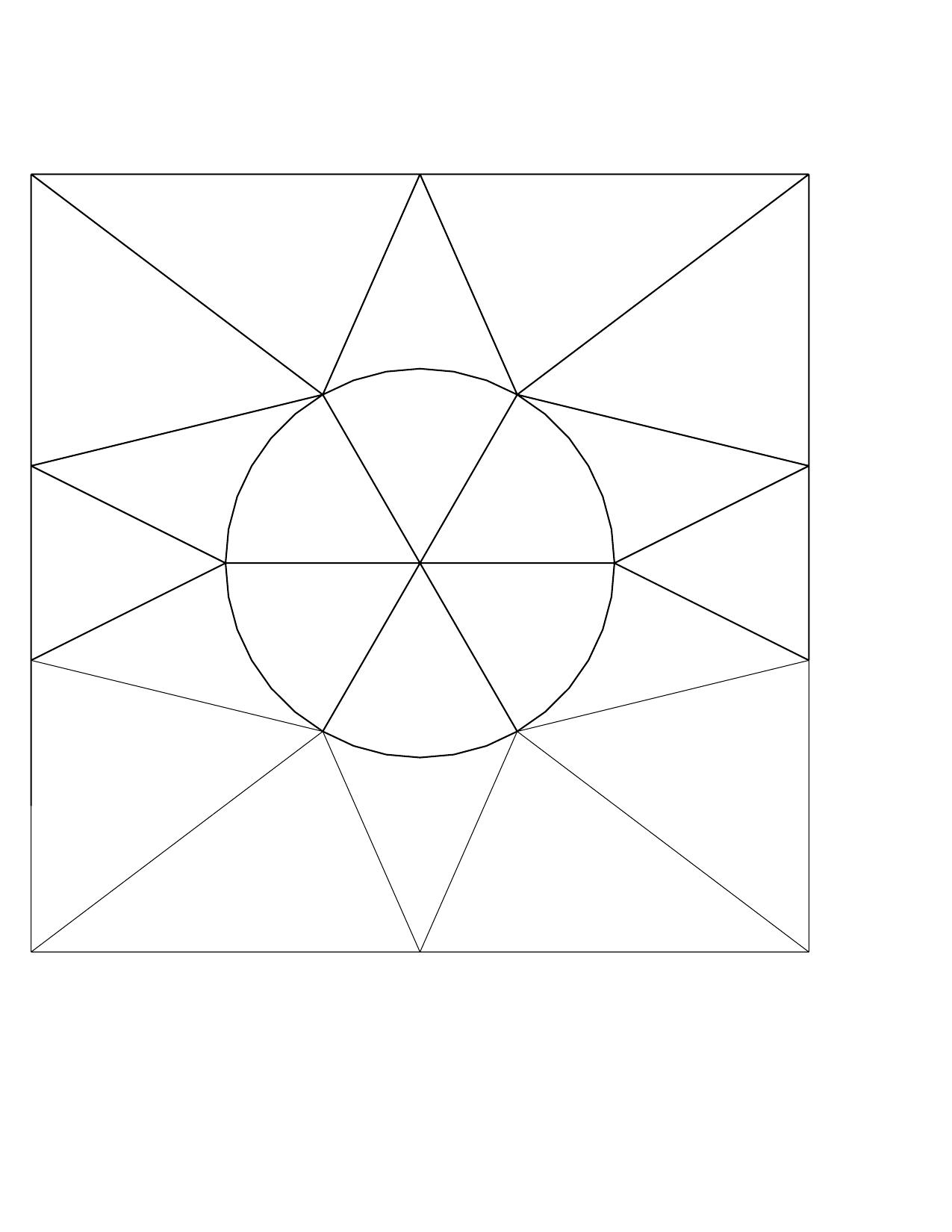}}
 \put(2.02,-0.35){\includegraphics[width=1.4in]{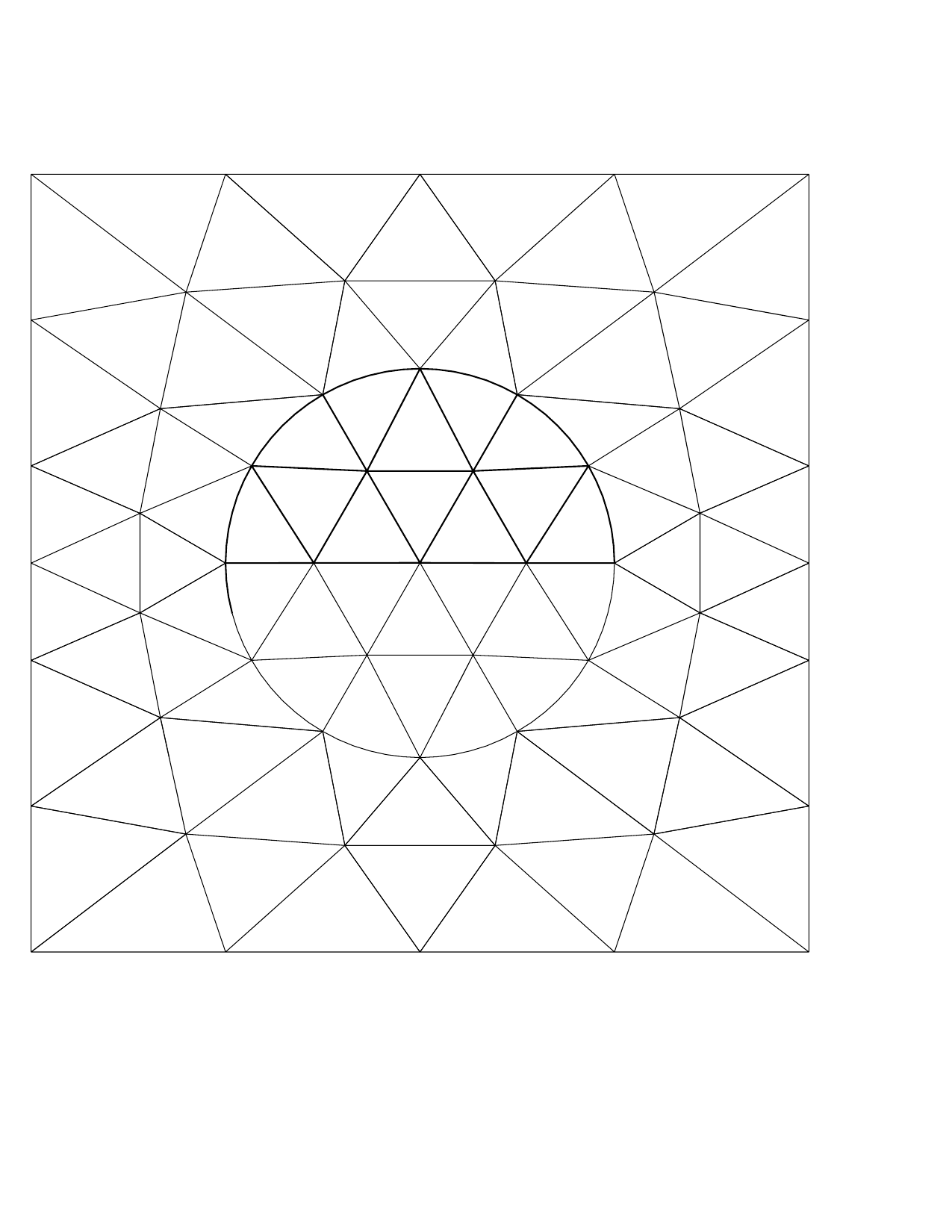}}
 \put(3.03,-0.35){\includegraphics[width=1.4in]{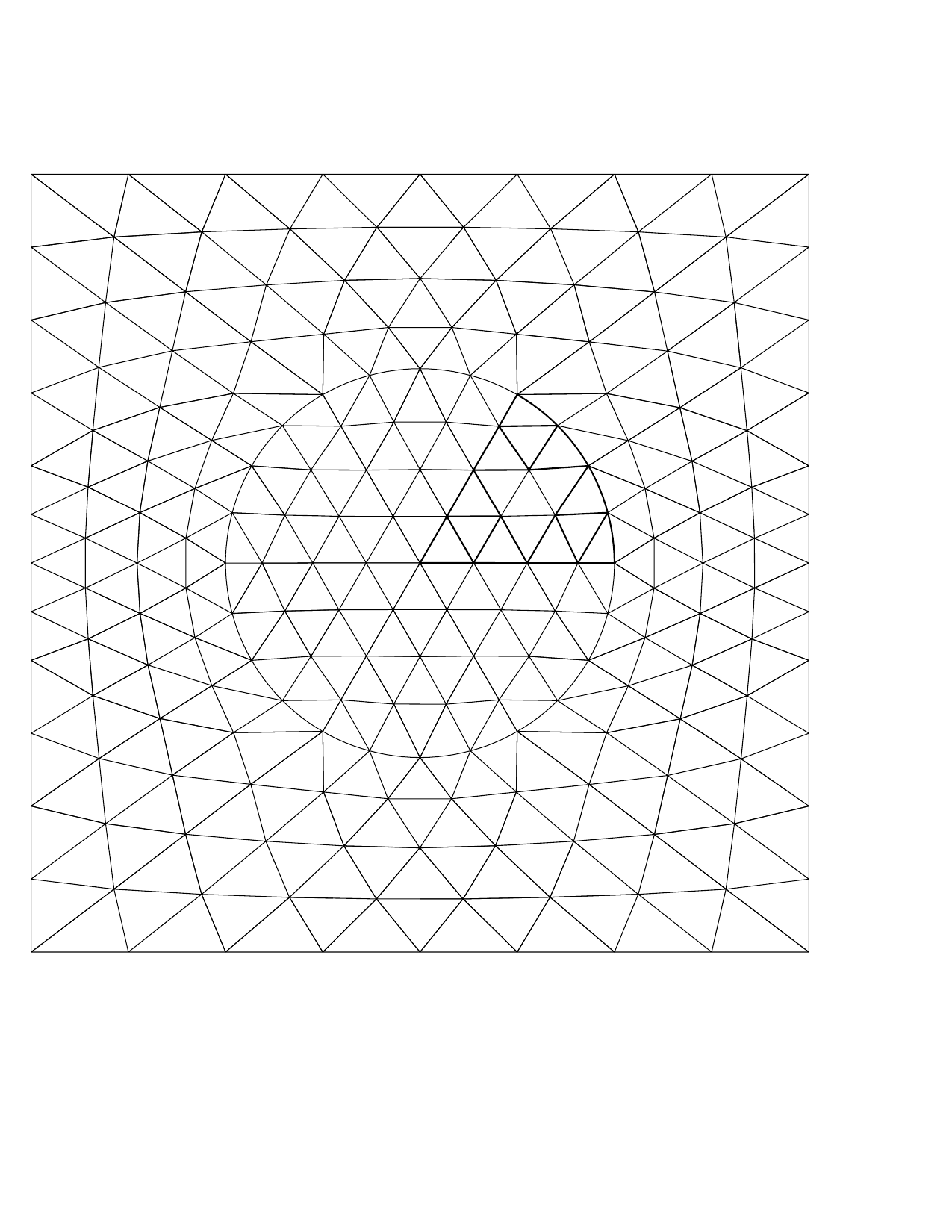}}
    \end{picture}
\caption{The interface $\Gamma$ and the first three grids for the computation in Tables
    \ref{t1}--\ref{t5}.  } \label{g2}
\end{center}
\end{figure}

The weak solution of \eqref{e-1} is 
\an{\label{s-1} 
   u(x,y) = \begin{cases} \mu^{-1} ( 1+\mu -(x^2+y^2)^3) & \t{if }\ x^2+y^2 < 1, \\
                            1 & \t{if }\ x^2+y^2 = 1, \\
                         2-(x^2+y^2)^3 & \t{if }\ x^2+y^2 > 1. \end{cases} }
We note that with the careful construction \eqref{s-1}, the weak solution of \eqref{e-1} is
   the strong solution of \eqref{model-1}--\eqref{model-4} as 
\a{ u|_{\Gamma} \ne 0, \quad  \mu\partial_{\b n}^{(n)} u|_{\Gamma^-}
            =\partial_{\b n}^{(n)} u |_{\Gamma^-}, \ n=1,2,\dots,  }
where the interface $\Gamma=\{ (x,y) : x^2+y^2=1 \}$, $\b n$ is the
   unit outward normal vector on $\Gamma$, and $\partial_{\b n}^{(n)} u $ is the
  $n$-th directional derivative of $u$ in the direction $\b n$.

\begin{table}[ht]
  \centering \renewcommand{\arraystretch}{1.1}
  \caption{The error of $P_1$ elements  for \eqref{e-1} on triangular grids (Figure \ref{g2}) }\label{t1}
\begin{tabular}{c|cc|cc}
\hline
$G_i$  & $\|Q_0 u -  u_0\|_{0,a} $  &rate & $ \|\nabla_w(Q_h u -u_h)\|_{0,a^2} $ &rate    \\
\hline
&\multicolumn{4}{c}{By the $P_1$-$P_{2}$-$P_{2}^2$ finite element, $\mu=10^{-4}$ in \eqref{mu}.}
\\ \hline
 4&  0.1024E-01&4.0&  0.1348E+00&2.9\\ 
 5&  0.6225E-03&4.0&  0.1662E-01&3.0\\ 
 6&  0.3837E-04&4.0&  0.2062E-02&3.0\\ 
 \hline
&\multicolumn{4}{c}{By the $P_1$-$P_{2}$-$P_{2}^2$ finite element, $\mu=1$ in \eqref{mu}.}
\\ \hline
 4&  0.4209E-02&4.0&  0.1348E+00&2.9\\ 
 5&  0.2675E-03&4.0&  0.1733E-01&3.0\\ 
 6&  0.1688E-04&4.0&  0.2200E-02&3.0\\ 
 \hline
&\multicolumn{4}{c}{By the $P_1$-$P_{2}$-$P_{2}^2$ finite element, $\mu=10^{4}$ in \eqref{mu}.}
\\ \hline
 4&  0.1503E+00&4.0&  0.1348E+00&2.9\\ 
 5&  0.9339E-02&4.0&  0.1733E-01&3.0 \\
 6&  0.5956E-03&4.0&  0.2200E-02&3.0\\
 \hline 
\end{tabular}%
\end{table}%

Because of the limited precision of computer double precision algorithm,
we could not reach enough levels of order five or above convergence for the
  $P_k$-$P_{k-1}$-$P_{k-1}^2$ weak Galerkin finite elements.
Instead,  we use a two-order superconvergent $P_k$-$P_{k+1}$-$P_{k+1}^2$ weak Galerkin method, 
 i.e., $v_b|_e\in V_b(e,k+1)$ in \eqref{V-h} and $\nabla_w v =\nabla_{w,k+1,T}$ in
    \eqref{w-g}.
This way, using low degree polynomials,
   we can compute order-eight convergent solutions for the curved-edge interface
  problem \eqref{e-1}.

In Table \ref{t1},  we list the results of the $P_1$-$P_{2}$-$P_{2}^2$ finite element
  for solving the interface problem \eqref{e-1} on meshes shown in Figure \ref{g2}.
Here, to cancel somewhat the difference of solutions with different $\mu$, we
 use a weighted norm to measure the error, 
\a{ \| u \|_{0,a}^2 = \int_{\Omega} a(x,y) u^2(x,y) dx\, dy.  }
Supposedly the $P_1$ finite element converges at order 2 in $L^2$ norm and order 1 in
   $H^1$ norm, respectively.
But as the method of two-order superconvergence, the $P_1$ finite element solution
  converges two orders above the optimal order, in both norms,  in Table \ref{t1}.

\begin{table}[ht]
  \centering \renewcommand{\arraystretch}{1.1}
  \caption{The error of $P_2$ elements  for \eqref{s-1} on triangular grids (Figure \ref{g2}) }\label{t2}
\begin{tabular}{c|cc|cc}
\hline
$G_i$ & $\|Q_0 u -  u_0\|_{0,a} $  &rate & $ \|\nabla_w(Q_h u -u_h)\|_{0,a^2} $ &rate    \\
\hline
&\multicolumn{4}{c}{By the $P_2$-$P_{3}$-$P_{3}^2$ finite element, $\mu=10^{-4}$ in \eqref{mu}.}
\\ \hline
 3&  0.3400E-02&5.0&  0.2364E-01&4.0\\ 
 4&  0.1032E-03&5.0&  0.1406E-02&4.1\\ 
 5&  0.3095E-05&5.1&  0.8373E-04&4.1\\ 
 \hline
&\multicolumn{4}{c}{By the $P_2$-$P_{3}$-$P_{3}^2$ finite element, $\mu=1$ in \eqref{mu}.}
\\ \hline
 3&  0.1092E-02&4.9&  0.2361E-01&4.0\\ 
 4&  0.3545E-04&4.9&  0.1488E-02&4.0\\ 
 5&  0.1141E-05&5.0&  0.9387E-04&4.0\\ 
 \hline
&\multicolumn{4}{c}{By the $P_2$-$P_{3}$-$P_{3}^2$ finite element, $\mu=10^{4}$ in \eqref{mu}.}
\\ \hline
 2&  0.5463E-01&5.2&  0.3779E+00&4.1\\ 
 3&  0.1317E-02&5.4&  0.2363E-01&4.0\\ 
 4&  0.3647E-04&5.2&  0.1488E-02&4.0\\ 
 \hline 
\end{tabular}%
\end{table}%

In Table \ref{t2},  we list the results of the $P_2$-$P_{3}$-$P_{3}^2$ finite element
  for solving the interface problem \eqref{e-1} on meshes shown in Figure \ref{g2}.
The optimal order of convergence of the $P_2$ finite element is
   order 3 and order 2 in $L^2$ norm and $H^1$ norm, respectively.
Here in Table \ref{t2}, the finite element solution converges two orders above the
  optimal order.
It seems from Table \ref{t2} that the error bound is independent of the  
   size of jump of the coefficient $a$ in the interface problem \eqref{e-1}.

\begin{figure}[ht]\begin{center}\setlength\unitlength{1.2in} 
    \begin{picture}(3.03,2.1) 
 \put(0,-2.1){\includegraphics[width=4in]{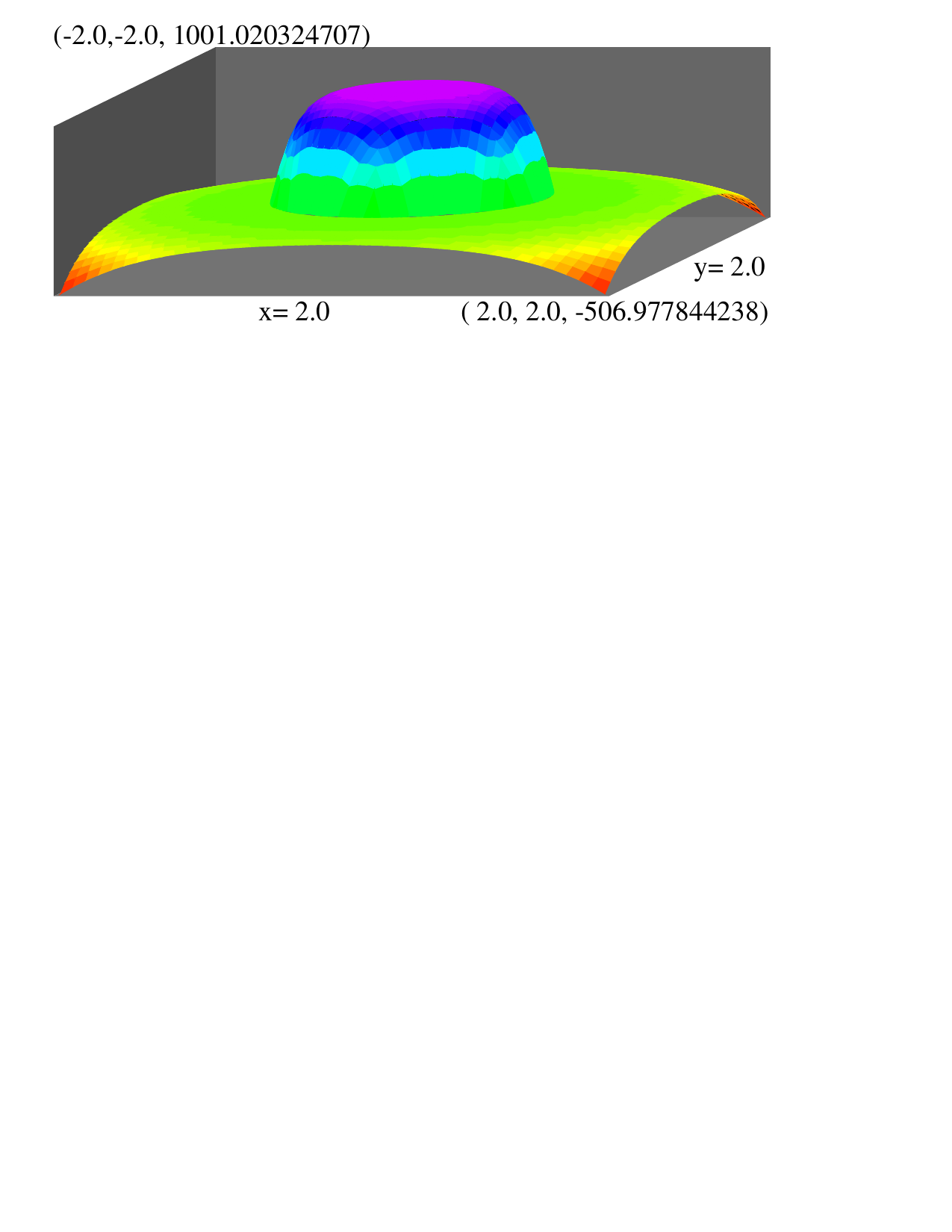}}   
 \put(0,-3.23){\includegraphics[width=4in]{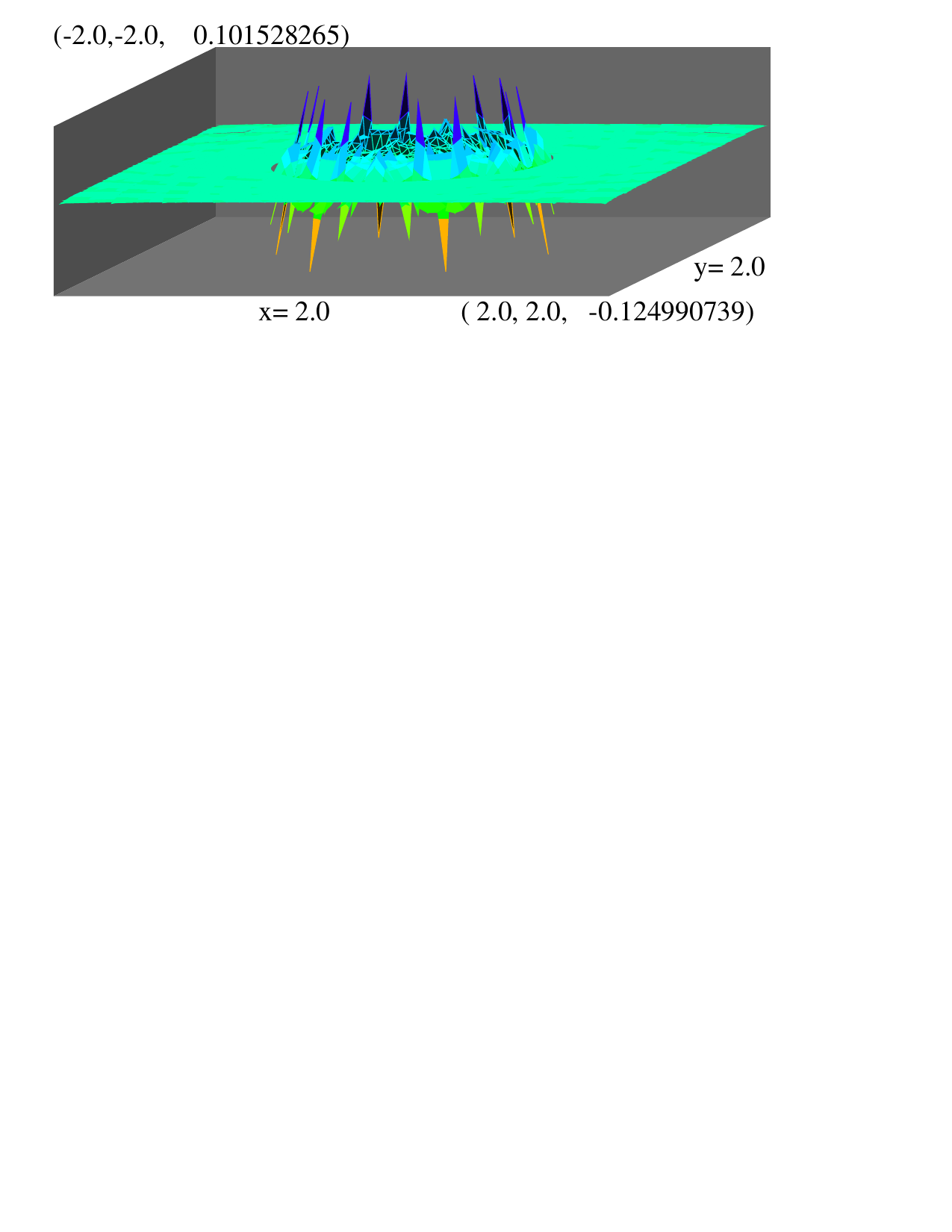}}
    \end{picture}
\caption{Top: \ The $P_2$ finite element solution for \eqref{e-1} with $\mu=10^{-3}$ in
   \eqref{mu} on the third grid $G_3$ in
   Figure \ref{g2}. Bottom: \ The error of the solution above.  } \label{sol2}
\end{center}
\end{figure}

In Figure \ref{sol2}, we plot the $P_2$ solution for the interface problem \eqref{e-1}, where
  $\mu=10^{-3}$, on the third grid $G_3$ in Figure \ref{g2}.
We can see that the normal derivative of the solution jumps to one thousand times large
   at the interface circle, i.e., a sharp turn there.   
Also in Figure \ref{sol2}, we plot the error of above solution on the same mesh.
The error indicates that the method matches the interface curve well and the
  error bound is truly independent of the $\mu$-jump.

\begin{table}[ht]
  \centering \renewcommand{\arraystretch}{1.1}
  \caption{The error of $P_3$ elements  for \eqref{e-1} on triangular grids (Figure \ref{g2}) }\label{t3}
\begin{tabular}{c|cc|cc}
\hline
$G_i$ & $\|Q_0 u -  u_0\|_{0,a} $  &rate & $ \|\nabla_w(Q_h u -u_h)\|_{0,a^2} $ &rate    \\
\hline
&\multicolumn{4}{c}{By the $P_3$-$P_{4}$-$P_{4}^2$ finite element, $\mu=10^{-4}$ in \eqref{mu}.}
\\ \hline
 3&  0.5685E-04&6.1&  0.3073E-03&5.0\\ 
 4&  0.8292E-06&6.1&  0.9668E-05&5.0\\ 
 5&  0.1210E-07&6.1&  0.3021E-06&5.0\\ 
 \hline
&\multicolumn{4}{c}{By the $P_3$-$P_{4}$-$P_{4}^2$ finite element, $\mu=1$ in \eqref{mu}.}
\\ \hline
 3&  0.8999E-05&6.0&  0.3068E-03&5.0\\ 
 4&  0.1441E-06&6.0&  0.9655E-05&5.0\\ 
 5&  0.2360E-08&5.9&  0.3052E-06&5.0\\ 
 \hline
&\multicolumn{4}{c}{By the $P_3$-$P_{4}$-$P_{4}^2$ finite element, $\mu=10^{2}$ in \eqref{mu}.}
\\ \hline
 2&  0.5741E-03&6.2&  0.1006E-01&5.2 \\
 3&  0.8996E-05&6.0&  0.3072E-03&5.0 \\
 4&  0.1441E-06&6.0&  0.9665E-05&5.0 \\
 \hline 
\end{tabular}%
\end{table}%

In Table \ref{t3},  we list the results of the $P_3$-$P_{4}$-$P_{4}^2$ finite element
  for solving the interface problem \eqref{e-1} on meshes shown in Figure \ref{g2}. 
Again in Table \ref{t4}, the finite element solution converges at two orders above the
  optimal order, in both norms.

\begin{table}[ht]
  \centering \renewcommand{\arraystretch}{1.1}
  \caption{The error of $P_4$ elements for \eqref{e-1} on triangular grids (Figure \ref{g2}) }\label{t4}
\begin{tabular}{c|cc|cc}
\hline
$G_i$ & $\|Q_0 u -  u_0\|_{0,a} $  &rate & $ \|\nabla_w(Q_h u -u_h)\|_{0,a^2} $ &rate    \\
\hline
&\multicolumn{4}{c}{By the $P_4$-$P_{5}$-$P_{5}^2$ finite element, $\mu=10^{-4}$ in \eqref{mu}.}
\\ \hline
 2&  0.1010E-03&7.3&  0.8618E-04&6.5\\ 
 3&  0.6374E-06&7.3&  0.1009E-05&6.4\\ 
 4&  0.4795E-08&7.1&  0.1258E-07&6.4\\ 
 \hline
&\multicolumn{4}{c}{By the $P_3$-$P_{4}$-$P_{4}^2$ finite element, $\mu=1$ in \eqref{mu}.}
\\ \hline
 1&  0.3672E-03&0.0&  0.7482E-02&0.0 \\
 2&  0.2404E-05&7.3&  0.8498E-04&6.5 \\
 3&  0.1695E-07&7.1&  0.1042E-05&6.3 \\
 \hline
&\multicolumn{4}{c}{By the $P_3$-$P_{4}$-$P_{4}^2$ finite element, $\mu=10$ in \eqref{mu}.}
\\ \hline
 1&  0.3406E-03&0.0&  0.7541E-02&0.0 \\
 2&  0.2254E-05&7.2&  0.8560E-04&6.5 \\
 3&  0.1612E-07&7.1&  0.1049E-05&6.4 \\
 \hline 
\end{tabular}%
\end{table}%

In Table \ref{t4},  we list the results of the $P_4$-$P_{5}$-$P_{5}^2$ finite element
  for solving the interface problem \eqref{e-1} on meshes shown in Figure \ref{g2}. 
Again in Table \ref{t4}, the finite element solution converges at two orders above the
  optimal order.

Finally, in Table \ref{t5},  we list the results of the $P_5$-$P_{6}$-$P_{6}^2$ finite element
  for solving the interface problem \eqref{e-1} on meshes shown in Figure \ref{g2}. 
The finite element solution converges at order eight, two orders above the
  optimal order, in $L^2$ norm, when $\mu=10^{-1}$. 
But when the error reaches $10^{-9}$ size, the computer accuracy is exhausted that
  we have a slightly less order of convergence at the last level, when $\mu=1$ (smooth
  solution) and $\mu=2$ (a derivative jump solution.)

\begin{table}[ht]
  \centering \renewcommand{\arraystretch}{1.1}
  \caption{The error of $P_5$ elements for \eqref{e-1} on triangular grids (Figure \ref{g2}) }\label{t5}
\begin{tabular}{c|cc|cc}
\hline
$G_i$ & $\|Q_0 u -  u_0\|_{0,a} $  &rate & $ \|\nabla_w(Q_h u -u_h)\|_{0,a^2} $ &rate    \\
\hline
&\multicolumn{4}{c}{By the $P_5$-$P_{6}$-$P_{6}^2$ finite element, $\mu=10^{-4}$ in \eqref{mu}.}
\\ \hline
 1&  0.9320E-03&0.0&  0.5672E-03&0.0\\ 
 2&  0.2808E-05&8.4&  0.3440E-05&7.4\\ 
 3&  0.1028E-07&8.1&  0.3200E-07&6.7\\ 
 \hline
&\multicolumn{4}{c}{By the $P_5$-$P_{6}$-$P_{6}^2$ finite element, $\mu=1$ in \eqref{mu}.}
\\ \hline
 1&  0.2238E-04&0.0&  0.5576E-03&0.0 \\
 2&  0.8150E-07&8.1&  0.3325E-05&7.4 \\
 3&  0.4517E-09&7.5&  0.2190E-07&7.2 \\
 \hline
&\multicolumn{4}{c}{By the $P_5$-$P_{6}$-$P_{6}^2$ finite element, $\mu=2$ in \eqref{mu}.}
\\ \hline
 1&  0.2159E-04&0.0&  0.5585E-03&0.0 \\
 2&  0.7879E-07&8.1&  0.3335E-05&7.4 \\
 3&  0.4194E-09&7.6&  0.2230E-07&7.2 \\
 \hline 
\end{tabular}%
\end{table}%

In the second numerical test,  we solve the interface problem \eqref{weak-formula} with
   a lightly irregular interface curve:
  Find $u\in H^1(\Omega)$ such that $u|_{\partial \Omega} = r^4 (r-3+\cos(4\theta)) $ and
\an{ (a \nabla u, \nabla v) = (48r^2-25r^3, v) \quad\forall v\in H^1_0(\Omega),
   \label{e-2} }
where $r=\sqrt{x^2+y^2}$, $\tan \theta=y/x$ and
   \an{ \label{mu2} \ad{
  a(x,y)&=\begin{cases} \mu & \t{if }\ r < 3-\cos(4\theta), \\
                         1  & \t{if }\ r \ge 3-\cos(4\theta), \end{cases} \\
  \Omega&= (-4,4)\times(-4,4).  } }

\begin{figure}[ht]\begin{center}\setlength\unitlength{1.2in} 
    \begin{picture}(4.03,1.1)
\put(0,0.98){$\Gamma$ and $\partial \Omega$:}
\put(1.02,0.98){$G_1$:}
\put(2.03,0.98){$G_2$:}
\put(3.04,0.98){$G_3$:}
 \put(0,-0.35){\includegraphics[width=1.4in]{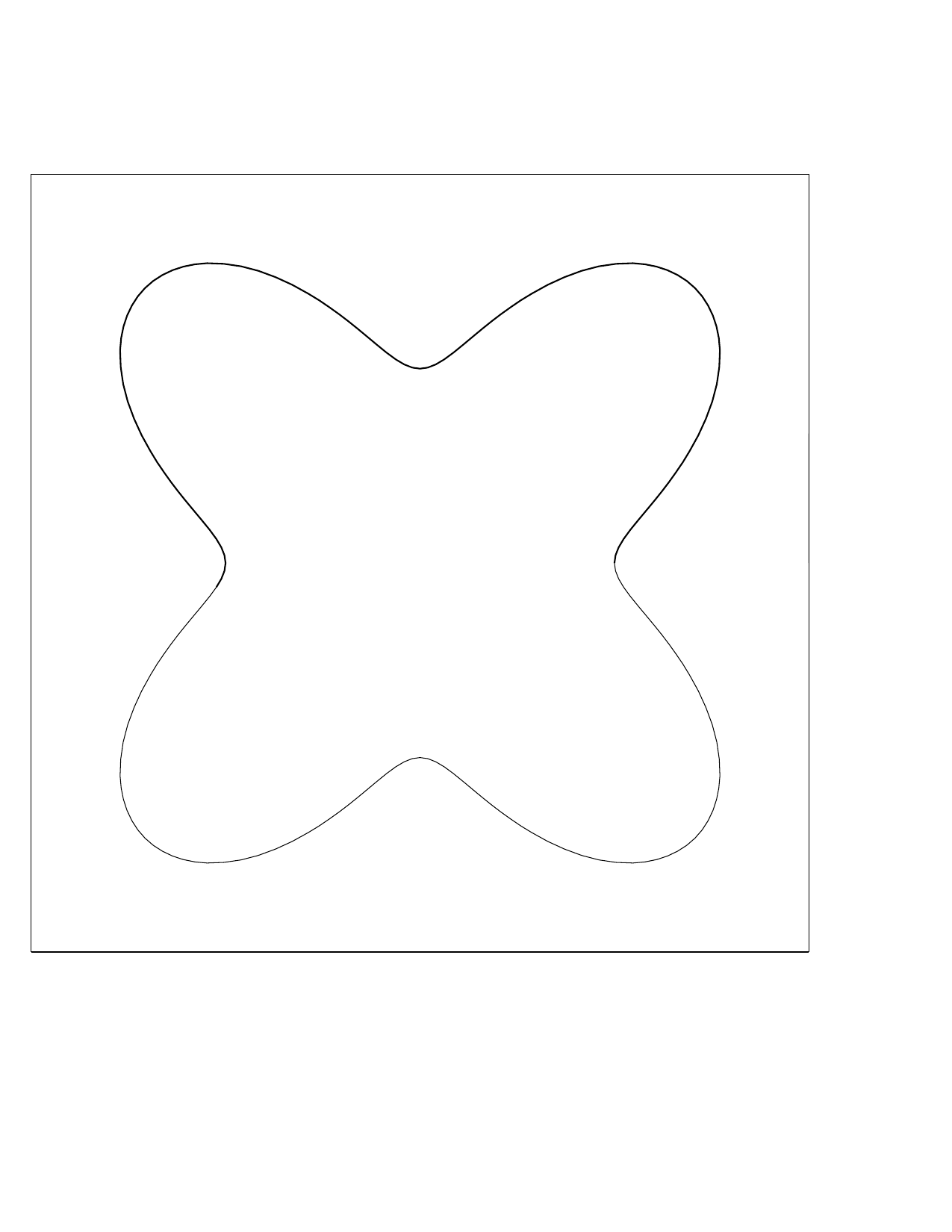}}  
 \put(1.01,-0.35){\includegraphics[width=1.4in]{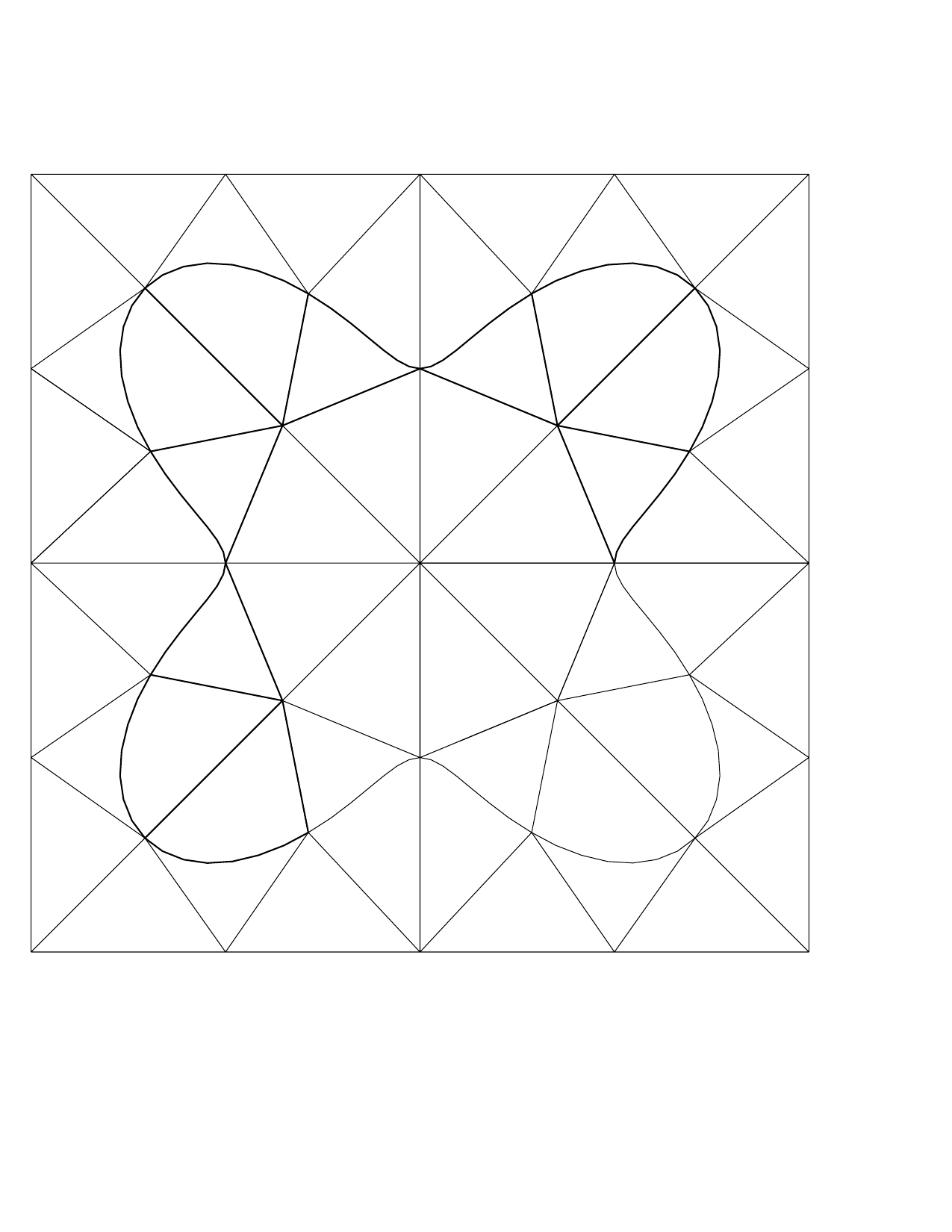}}
 \put(2.02,-0.35){\includegraphics[width=1.4in]{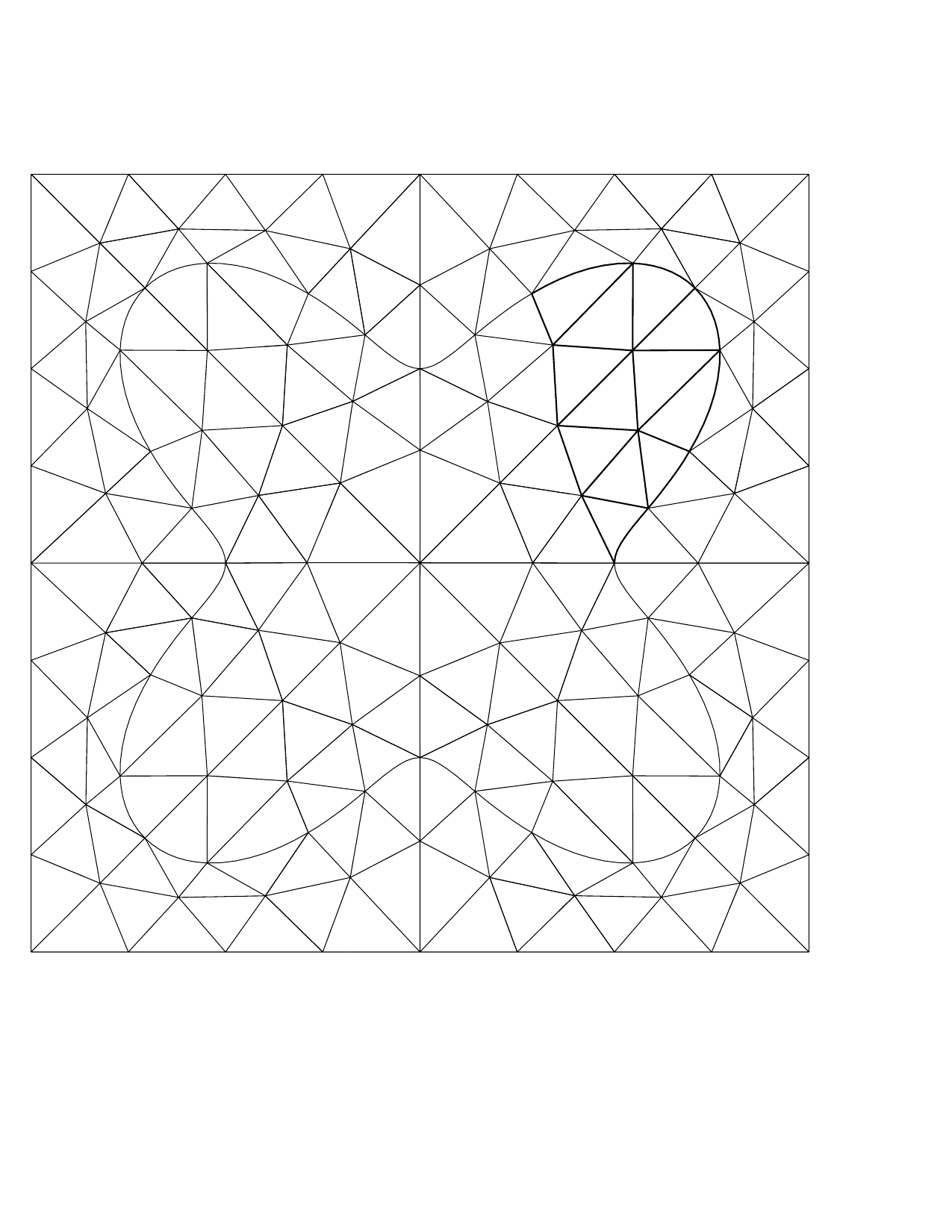}}
 \put(3.03,-0.35){\includegraphics[width=1.4in]{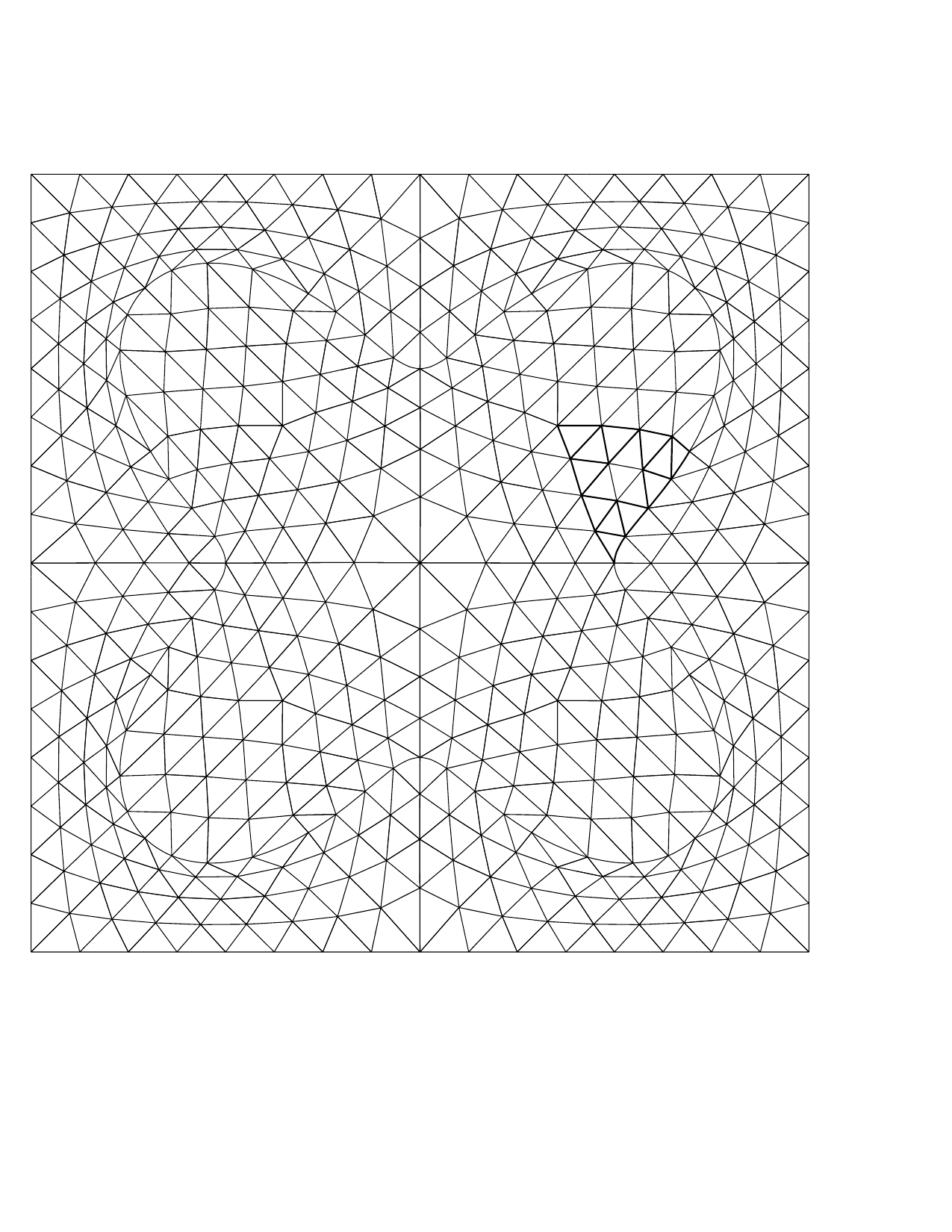}}
    \end{picture}
\caption{The interface $\Gamma$ and the first three grids for the computation in Tables
    \ref{t21}--\ref{t24}.  } \label{g1}
\end{center}
\end{figure}

The weak solution of \eqref{e-2} is 
\an{\label{s-2} 
   u(x,y) = \begin{cases} \mu^{-1} r^4 (r-3+\cos(4\theta)) & \t{if }\ r < 3-\cos(4\theta), \\ 
                   r^4 (r-3+\cos(4\theta)) & \t{if }\ r \ge 3-\cos(4\theta) . \end{cases} } 
The interface is $\Gamma=\{ (x,y) : r = 3-\cos(4\theta) \}$, shown in Figure \ref{g1}.

\begin{table}[ht]
  \centering \renewcommand{\arraystretch}{1.1}
  \caption{The error of $P_1$ elements  for \eqref{s-2} on grids shown in Figure \ref{g1}.}\label{t21}
\begin{tabular}{c|cc|cc}
\hline
$G_i$  & $\|Q_0 u -  u_0\|_{0,a} $  &rate & $ \|\nabla_w(Q_h u -u_h)\|_{0,a^2} $ &rate    \\
\hline
&\multicolumn{4}{c}{By the $P_1$-$P_{2}$-$P_{2}^2$ finite element, $\mu=10^{-2}$ in \eqref{mu2}.}
\\ \hline
 4&  0.5271E-02&4.0&  0.1129E+00&3.0 \\
 5&  0.3371E-03&4.0&  0.1386E-01&3.0 \\
 6&  0.2174E-04&4.0&  0.1716E-02&3.0 \\
 \hline
&\multicolumn{4}{c}{By the $P_1$-$P_{2}$-$P_{2}^2$ finite element, $\mu=1$ in \eqref{mu2}.}
\\ \hline
 4&  0.4209E-02&4.0&  0.1348E+00&2.9\\ 
 5&  0.2675E-03&4.0&  0.1733E-01&3.0\\ 
 6&  0.1688E-04&4.0&  0.2200E-02&3.0\\ 
 \hline
&\multicolumn{4}{c}{By the $P_1$-$P_{2}$-$P_{2}^2$ finite element, $\mu=10^{2}$ in \eqref{mu2}.}
\\ \hline
 4&  0.4062E-02&3.9&  0.1162E+00&3.1 \\
 5&  0.2727E-03&3.9&  0.1408E-01&3.0 \\
 6&  0.1816E-04&3.9&  0.1731E-02&3.0 \\
 \hline 
\end{tabular}%
\end{table}%

In Table  \ref{t21},  we list the computational errors of the $P_1$-$P_{2}$-$P_{2}^2$
   finite element
  for solving the interface problem \eqref{e-2} on meshes shown in Figure \ref{g1}. 
The result is perfect, showing two-order superconvergence, jump-independent error bounds,
  and accurate interface approximation.

\begin{table}[ht]
  \centering \renewcommand{\arraystretch}{1.1}
  \caption{The error of $P_2$ elements  for \eqref{s-2} on grids shown in Figure \ref{g1}.}\label{t22}
\begin{tabular}{c|cc|cc}
\hline
$G_i$  & $\|Q_0 u -  u_0\|_{0,a} $  &rate & $ \|\nabla_w(Q_h u -u_h)\|_{0,a^2} $ &rate    \\
\hline
&\multicolumn{4}{c}{By the $P_2$-$P_{3}$-$P_{3}^2$ finite element, $\mu=10^{-2}$ in \eqref{mu2}.}
\\ \hline
 3&  0.6259E-02&5.1&  0.2519E-01&4.6 \\
 4&  0.2035E-03&4.9&  0.1214E-02&4.4 \\
 5&  0.7258E-05&4.8&  0.6503E-04&4.2 \\
 \hline
&\multicolumn{4}{c}{By the $P_2$-$P_{3}$-$P_{3}^2$ finite element, $\mu=1$ in \eqref{mu2}.}
\\ \hline
 3&  0.9359E-03&5.4&  0.2179E-01&4.5 \\
 4&  0.2705E-04&5.1&  0.1101E-02&4.3 \\
 5&  0.9088E-06&4.9&  0.6097E-04&4.2 \\
 \hline
&\multicolumn{4}{c}{By the $P_2$-$P_{3}$-$P_{3}^2$ finite element, $\mu=10^{2}$ in \eqref{mu2}.}
\\ \hline
 3&  0.5124E-03&5.3&  0.2464E-01&4.6 \\
 4&  0.1606E-04&5.0&  0.1203E-02&4.4 \\
 5&  0.5719E-06&4.8&  0.6470E-04&4.2 \\
 \hline 
\end{tabular}%
\end{table}%

In Table  \ref{t22},  we list the   errors of the $P_2$-$P_{3}$-$P_{3}^2$
   finite element
  for solving the interface problem \eqref{e-2} on meshes shown in Figure \ref{g1}. 
The computation is accurate enough to
   show two-order superconvergence, jump-independent error bounds,
  and accurate interface approximation.

\begin{figure}[ht]\begin{center}\setlength\unitlength{1.2in} 
    \begin{picture}(3.03,2.1) 
 \put(0,-2.1){\includegraphics[width=4in]{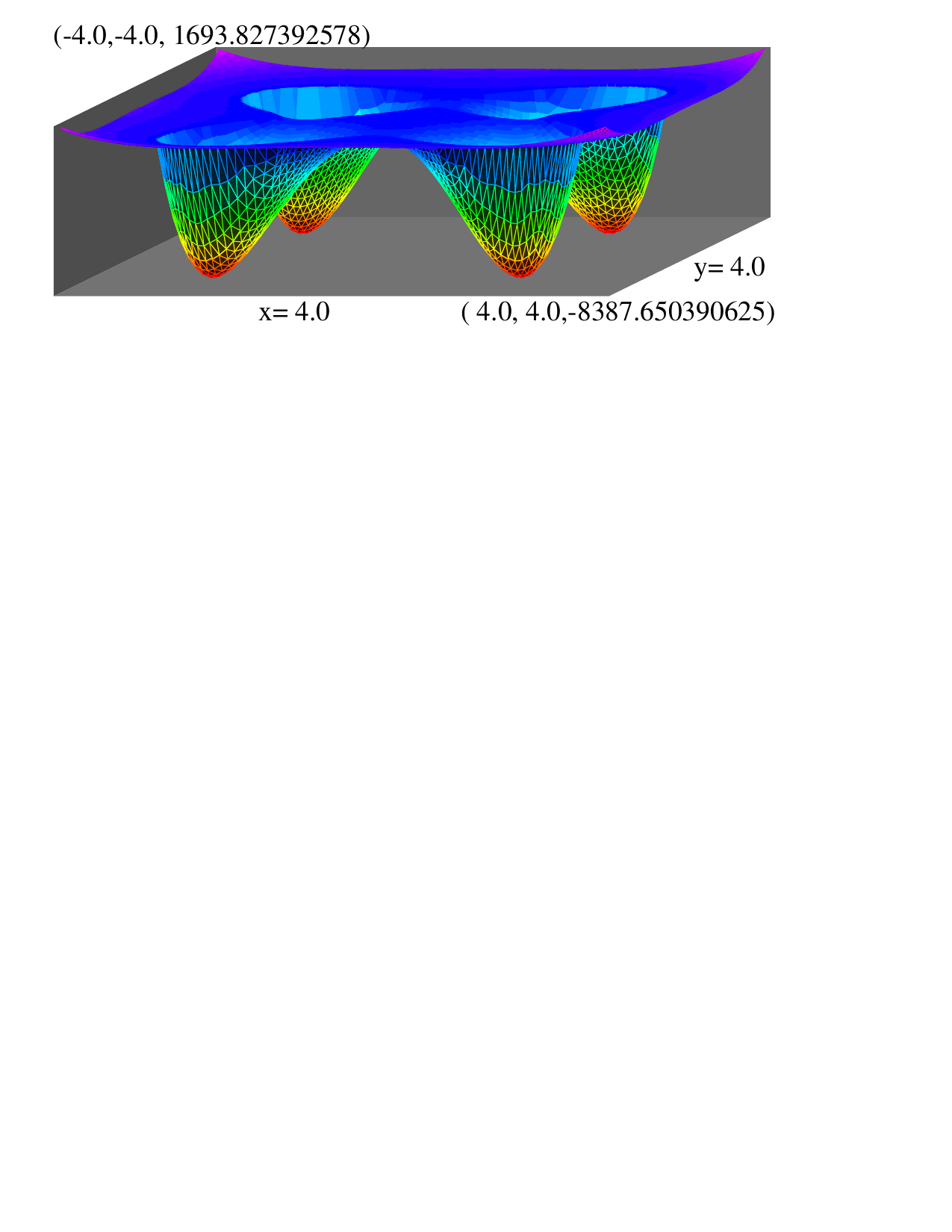}}   
 \put(0,-3.23){\includegraphics[width=4in]{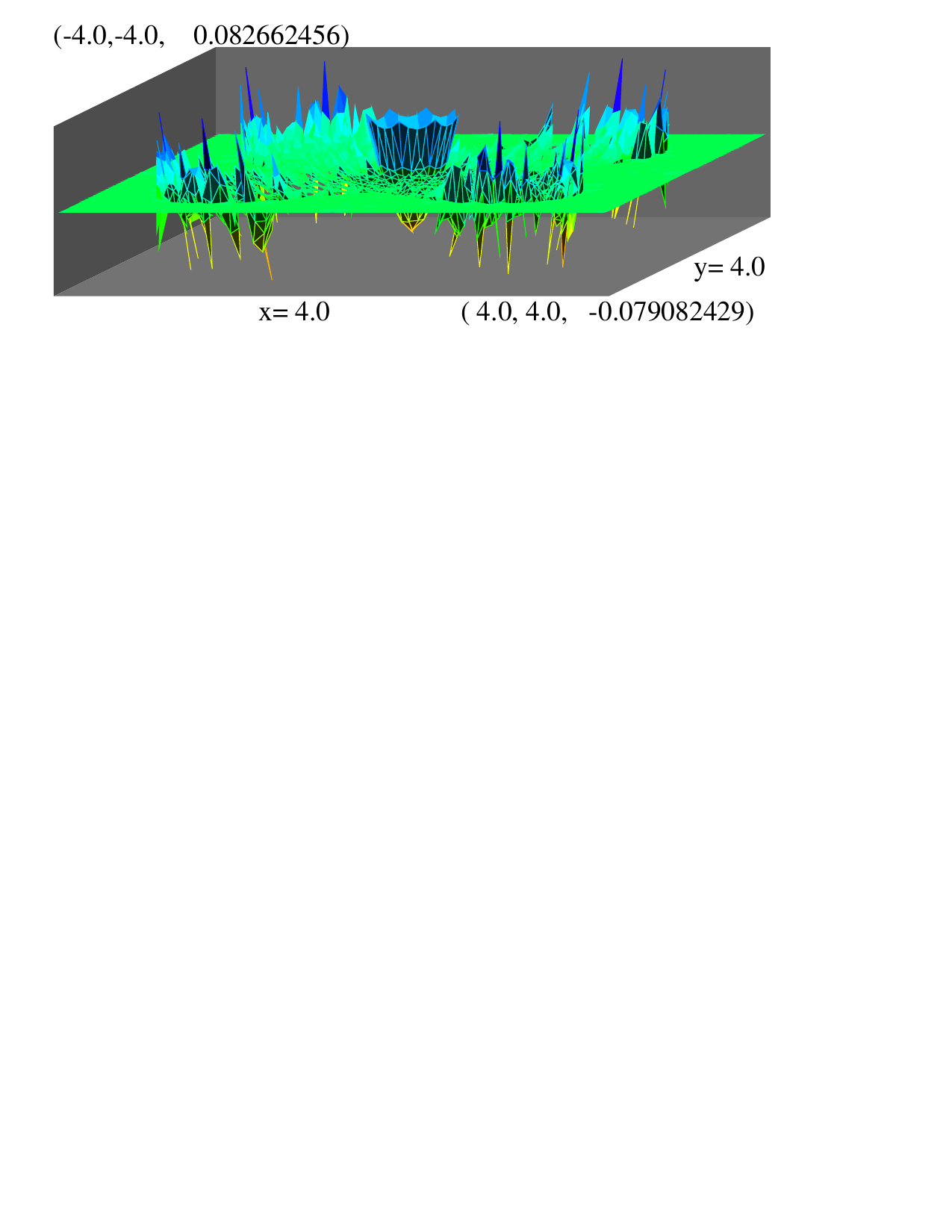}}
    \end{picture}
\caption{Top: \ The $P_2$ finite element solution for \eqref{e-2} with $\mu=10^{-2}$ in
   \eqref{mu2} on the second grid $G_2$ in
   Figure \ref{g1}. Bottom: \ The error of the solution above.  } \label{sol1}
\end{center}
\end{figure}

In Figure \ref{sol1}, the $P_2$ solution for the second interface problem \eqref{e-2}
   is plotted.  
The solution jumps downward at the interface curve.
We can see from the error graph of Figure \ref{sol1} that the error is independent of
   jump-size of the coefficient $\mu$.
It is surprising that the error at the edge of the center hexagon (see the third graph
   in Figure \ref{g1}) is even larger than that at the interface.
In fact, it shows our method approximates the interface very well so that the
   solution error is independent of the interface jump.
On the other side, it shows our method is very accurate that the underline meshes
  must be smooth.
Here, due to the geometry limitation, the meshes changed the pattern near the
   origin that even the regular hexagon and regular triangles are not the
   best mesh shapes there.

\begin{table}[ht]
  \centering \renewcommand{\arraystretch}{1.1}
  \caption{The error of $P_3$ elements  for \eqref{s-2} on grids shown in Figure \ref{g1}.}\label{t23}
\begin{tabular}{c|cc|cc}
\hline
$G_i$  & $\|Q_0 u -  u_0\|_{0,a} $  &rate & $ \|\nabla_w(Q_h u -u_h)\|_{0,a^2} $ &rate    \\
\hline
&\multicolumn{4}{c}{By the $P_3$-$P_{4}$-$P_{4}^2$ finite element, $\mu=10^{-2}$ in \eqref{mu2}.}
\\ \hline
 1&  0.7386E+00&0.0&  0.1385E+01&0.0 \\
 2&  0.1382E-01&5.7&  0.4572E-01&4.9 \\
 3&  0.1904E-03&6.2&  0.1572E-02&4.9 \\
 \hline
&\multicolumn{4}{c}{By the $P_3$-$P_{4}$-$P_{4}^2$ finite element, $\mu=1$ in \eqref{mu2}.}
\\ \hline
 1&  0.1078E+00&0.0&  0.1190E+01&0.0 \\
 2&  0.2541E-02&5.4&  0.4101E-01&4.9 \\
 3&  0.4467E-04&5.8&  0.1349E-02&4.9 \\
 \hline
&\multicolumn{4}{c}{By the $P_3$-$P_{4}$-$P_{4}^2$ finite element, $\mu=10^{2}$ in \eqref{mu2}.}
\\ \hline
 1&  0.5595E-01&0.0&  0.1412E+01&0.0 \\
 2&  0.1201E-02&5.5&  0.4627E-01&4.9 \\
 3&  0.2476E-04&5.6&  0.1615E-02&4.8 \\
 \hline 
\end{tabular}%
\end{table}%

In Table  \ref{t23},  we list the   errors of the $P_3$-$P_{4}$-$P_{4}^2$
   finite element
  for solving the interface problem \eqref{e-2} on meshes shown in Figure \ref{g1}. 
The computation is barely accurate enough to
   show two-order superconvergence, jump-independent error bounds,
  and accurate interface approximation.

\begin{table}[ht]
  \centering \renewcommand{\arraystretch}{1.1}
  \caption{The error of $P_4$ elements  for \eqref{s-2} on grids shown in Figure \ref{g1}.}\label{t24}
\begin{tabular}{c|cc|cc}
\hline
$G_i$  & $\|Q_0 u -  u_0\|_{0,a} $  &rate & $ \|\nabla_w(Q_h u -u_h)\|_{0,a^2} $ &rate    \\
\hline
&\multicolumn{4}{c}{By the $P_4$-$P_{5}$-$P_{5}^2$ finite element, $\mu=2^{-1}$ in \eqref{mu2}.}
\\ \hline
 1&  0.2601E-01&0.0&  0.2876E+00&0.0\\ 
 2&  0.1893E-03&7.1&  0.3930E-02&6.2\\ 
 3&  0.4570E-05&5.4&  0.3638E-04&6.8\\ 
 \hline
&\multicolumn{4}{c}{By the $P_4$-$P_{5}$-$P_{5}^2$ finite element, $\mu=1$ in \eqref{mu2}.}
\\ \hline
 1&  0.2173E-01&0.0&  0.2818E+00&0.0 \\
 2&  0.1599E-03&7.1&  0.3831E-02&6.2 \\
 3&  0.3966E-05&5.3&  0.3579E-04&6.7 \\ 
 \hline
&\multicolumn{4}{c}{By the $P_4$-$P_{5}$-$P_{5}^2$ finite element, $\mu=2$ in \eqref{mu2}.}
\\ \hline
 1&  0.1799E-01&0.0&  0.2876E+00&0.0\\ 
 2&  0.1344E-03&7.1&  0.3941E-02&6.2\\ 
 3&  0.3679E-05&5.2&  0.3643E-04&6.8\\ 
 \hline 
\end{tabular}%
\end{table}%

In Table  \ref{t24},  we list the   errors of the $P_4$-$P_{5}$-$P_{5}^2$
   finite element
  for solving the interface problem \eqref{e-2} on meshes shown in Figure \ref{g1}. 
The computation accuracy is reached that the third level $L^2$ errors do not reach
   the two orders above the optimal order,  due to computer round-off.
It is supposedly of order 7, if computed in a better accuracy computer.

\end{document}